\newtheorem{thm}{Theorem}[section]
\newtheorem{dfn}[thm]{Definition}
\newtheorem{cor}[thm]{Corollary}
\newtheorem{lem}[thm]{Lemma}
\newtheorem{obs}[thm]{Observation}
\newcommand{\Z}{\operatorname{Z}}
\newcommand{\F}{\operatorname{F}}
\newcommand{\mr}{\operatorname{mr}}
\newcommand{\fgp}{\bar{\gamma}_p}
\title{Failed power domination on graphs}
\author[1]{Abraham Glasser}
\author[2]{Bonnie Jacob\thanks{bcjntm@rit.edu}}
\author[1]{Emily Lederman}
\author[1]{Stanis\l{}aw Radziszowski}
\affil[1]{Department of Computer Science, Rochester Institute of Technology}
\affil[2]{Science and Mathematics Department, National Technical Institute for the Deaf, Rochester Institute of Technology}
\begin{document}
\maketitle

\begin{abstract}
Let $G$ be a simple graph with vertex set $V$ and edge set $E$, and let $S \subseteq V$.  The \emph{open neighborhood} of $v \in V$, $N(v)$, is the set of vertices adjacent to $v$; the \emph{closed neighborhood} is given by $N[v] = N(v) \cup \{v\}$.  The \emph{open neighborhood} of $S$, $N(S)$, is the union of the open neighborhoods of vertices in $S$, and the \emph{closed neighborhood} of $S$ is $N[S] = S \cup N(S)$.  The sets $ \mathcal{P}^i(S), i \geq 0$,   of vertices \emph{monitored} by $S$ at the $i^{\mbox{th}}$ step are given by $\mathcal{P}^0(S) = N[S]$ and $\mathcal{P}^{i+1}(S) = \mathcal{P}^i(S) \bigcup\left\{ w : \{ w \} = N[v] \backslash \mathcal{P}^i(S) \mbox{ for some } v \in \mathcal{P}^i(S) \right\}$.  If there exists $j$ such that $\mathcal{P}^j(S) = V$, then $S$ is called a \emph{power dominating set}, PDS, of $G$.  

We introduce and discuss the \emph{failed power domination number} of a graph $G$, $\fgp(G)$, the largest cardinality of a set that is not a PDS.  We prove that $\fgp(G)$ is NP-hard to compute, determine graphs in which every vertex is a PDS, and compare $\fgp(G)$ to similar parameters.  
\end{abstract}

\section{Introduction}
This paper studies power domination on graphs, which arose  because of applications to electric power networks \cite{baldwin1993power, haynes2002domination}.  We denote by $G=(V,E)$ a finite simple graph with vertex set $V$ and edge set $E$.  In cases where the graph in question is ambiguous, we use $V(G)$ and $E(G)$.  The \emph{open neighborhood} of a vertex $v \in V$, denoted $N_G(v)$ or $N(v)$ when the graph is understood, is the set of vertices adjacent to $v$; the \emph{closed neighborhood} of $v$, $N[v]$, is $N(v) \cup \{v\}$.  The \emph{open neighborhood} of a set $S \subseteq V$, denoted $N(S)$, is the union of open neighborhoods of vertices in $S$, and the \emph{closed neighborhood} of $S$, $N[S]$, is defined as  $S \cup N(S)$.   A vertex $v$ is \emph{dominated by} $S$ if $v \in N[S]$.  A set $S$ is a \emph{dominating set} if $N[S]=V$. The minimum cardinality of all dominating sets of $G$ is the \emph{domination number} $\gamma(G)$.   

Power domination differs from domination in that it contains a second step known as the \emph{propagation step}.  We use notation similar to that formalized in \cite{aazami2010domination}.  Let $i \in \mathbb{N}_0 = \left\{ 0, 1, 2, 3, \ldots \right\}$.  If $G$ is a graph and $S \subseteq V$, then the set of vertices \emph{monitored} by $S$ at Step $i$, denoted $ \mathcal{P}^i(S)$, is defined as follows.

\begin{itemize}
\item $\mathcal{P}^0(S) = N[S]$,
\item $\mathcal{P}^{i+1}(S) = \mathcal{P}^i(S) \bigcup\left\{ w : \{ w \} = N[v] \backslash \mathcal{P}^i(S) \mbox{ for some } v \in \mathcal{P}^i(S) \right\}$.
\end{itemize} 

That is, Step 0 consists of finding the set of vertices dominated by $S$. For Step $i>0$, if a vertex in $\mathcal{P}^i(S)$ has exactly one neighbor $v$ outside of $\mathcal{P}^i(S)$, then we add $v$ to $\mathcal{P}^{i+1}(S)$.  The step corresponding to $i=0$ is known as the \emph{domination step} and those corresponding to $i >0$ as the \emph{propagation steps}.  Note that for any $i \geq 0$, $\mathcal{P}^i(S) \subseteq \mathcal{P}^{i+1}(S)$.  Also, if $\mathcal{P}^{i_0+1}(S) = \mathcal{P}^{i_0}(S)$ for some $i_0$, then $\mathcal{P}^{j}(S) = \mathcal{P}^{i_0}(S)$ for any $j \geq i_0$, and then we write $\mathcal{P}^{\infty}(S) = \mathcal{P}^{i_0}(S)$.

\begin{dfn}\begin{enumerate}
\renewcommand{\theenumi}{\alph{enumi}}
\item A \emph{power dominating set} (PDS) of $G$  is a set $S \subseteq V$ such that $\mathcal{P}^{\infty}(S)=V$. 
\item A \emph{failed power dominating set} (FPDS) is a set $S \subseteq V$ such that $S$ is not a PDS.  
\item A \emph{stalled power dominating set} (SPDS) is a set $S \subseteq V$ such that $\mathcal{P}^{\infty}(S) = \mathcal{P}^0(S)$.  That is, after the domination step, no propagation steps occur.  
\item The \emph{power domination number} of $G$, denoted by $\gamma_p(G)$, is the minimum cardinality among all power dominating sets of $G$.  \item The \emph{failed power domination number} of $G$, denoted by $\bar{\gamma}_p(G)$, is the maximum cardinality among all failed power dominating sets of $G$.  
\end{enumerate}
\end{dfn}

If $S$ is an SPDS in $G$ such that $S \cup \{ u\}$ is a PDS for any vertex $u \in V \backslash S$, then we say that $S$ is \emph{maximally stalled}.  To indicate that $S$ is an SPDS and $\mathcal{P}^0(S) \subsetneq V$, we say that $S$ is \emph{properly stalled}.

\begin{figure}[h!]
\begin{center}
\begin{minipage}{3in}
\begin{center}
\begin{tikzpicture}[auto]
\tikzstyle{vertex}=[draw, circle, inner sep=0.6mm]
\node (v1) at (0,0) [vertex, label=above left:$05$] {};
\node (v2) at (0,-1) [vertex, label=above left:$04$, fill=blue] {};
\node (v3) at  (0,-2) [vertex, label=above left:$03$] {};
\node (v4) at (0,-3) [vertex, label=above left:$02$] {};
\node (v5) at (0, -4) [vertex, label=above left:$01$, fill=blue] {};
\node (v6) at (0, -5) [vertex, label=above left:$00$] {};
\node (v7) at (1,0) [vertex, label=above left:$15$] {};
\node (v8) at (1,-1) [vertex, label=above left:$14$] {};
\node (v9) at  (1,-2) [vertex, label=above left:$13$] {};
\node (v10) at (1,-3) [vertex, label=above left:$12$] {};
\node (v11) at (1, -4) [vertex, label=above left:$11$] {};
\node (v12) at (1, -5) [vertex, label=above left:$10$] {};
\node (v13) at (2,0) [vertex, label=above left:$25$] {};
\node (v14) at (2,-1) [vertex, label=above left:$24$] {};
\node (v15) at  (2,-2) [vertex, label=above left:$23$] {};
\node (v16) at (2,-3) [vertex, label=above left:$22$] {};
\node (v17) at (2, -4) [vertex, label=above left:$21$] {};
\node (v18) at (2, -5) [vertex, label=above left:$20$] {};
\node (v19) at (3,0) [vertex, label=above left:$35$] {};
\node (v20) at (3,-1) [vertex, label=above left:$34$] {};
\node (v21) at  (3,-2) [vertex, label=above left:$33$] {};
\node (v22) at (3,-3) [vertex, label=above left:$32$] {};
\node (v23) at (3, -4) [vertex, label=above left:$31$] {};
\node (v24) at (3, -5) [vertex, label=above left:$30$] {};
\node (v25) at (4,0) [vertex, label=above left:$45$] {};
\node (v26) at (4,-1) [vertex, label=above left:$44$] {};
\node (v27) at  (4,-2) [vertex, label=above left:$43$] {};
\node (v28) at (4,-3) [vertex, label=above left:$42$] {};
\node (v29) at (4, -4) [vertex, label=above left:$41$] {};
\node (v30) at (4, -5) [vertex, label=above left:$40$] {};
\node (v31) at (5,0) [vertex, label=above left:$55$] {};
\node (v32) at (5,-1) [vertex, label=above left:$54$] {};
\node (v33) at  (5,-2) [vertex, label=above left:$53$] {};
\node (v34) at (5,-3) [vertex, label=above left:$52$] {};
\node (v35) at (5, -4) [vertex, label=above left:$51$] {};
\node (v36) at (5, -5) [vertex, label=above left:$50$] {};
\foreach[evaluate=\y using int(\x-1)] \x in {2, 3, 4, 5, 6, 8, 9, 10, 11, 12, 14, 15, 16, 17, 18, 20, 21, 22, 23, 24, 26, 27, 28, 29, 30, 32, 33, 34, 35, 36}
\draw (v\y) to (v\x);
\foreach[evaluate=\y using int(\x+6), evaluate=\z using int(\y+6), evaluate=\w using int(\z+6), evaluate=\p using int(\w+6)] \x in {1, 2, 3, 4, 5, 6}
\draw  (v\p) to (v\w) to (v\z) to (v\y) to (v\x);
\foreach[evaluate=\y using int(\x+6)] \x in {25, 26, 27, 28, 29, 30}
\draw (v\y) to (v\x);
\end{tikzpicture}
\caption{A PDS $S$ in blue }
\label{fig:pds}
\end{center}
\end{minipage}
\begin{minipage}{3in}
\begin{center}
\begin{tikzpicture}[auto]
\tikzstyle{vertex}=[draw, circle, inner sep=0.6mm]
\node (v1) at (0,0) [vertex, label=above left:$05$] {};
\node (v2) at (0,-1) [vertex, label=above left:$04$] {};
\node (v3) at  (0,-2) [vertex, label=above left:$03$, fill=blue] {};
\node (v4) at (0,-3) [vertex, label=above left:$02$, fill=blue] {};
\node (v5) at (0, -4) [vertex, label=above left:$01$,  fill=blue] {};
\node (v6) at (0, -5) [vertex, label=above left:$00$, fill=blue] {};
\node (v7) at (1,0) [vertex, label=above left:$15$] {};
\node (v8) at (1,-1) [vertex, label=above left:$14$] {};
\node (v9) at  (1,-2) [vertex, label=above left:$13$] {};
\node (v10) at (1,-3) [vertex, label=above left:$12$, fill=blue] {};
\node (v11) at (1, -4) [vertex, label=above left:$11$, fill=blue] {};
\node (v12) at (1, -5) [vertex, label=above left:$10$, fill=blue] {};

\node (v13) at (2,0) [vertex, label=above left:$25$, fill=blue] {};
\node (v14) at (2,-1) [vertex, label=above left:$24$] {};
\node (v15) at  (2,-2) [vertex, label=above left:$23$] {};
\node (v16) at (2,-3) [vertex, label=above left:$22$] {};
\node (v17) at (2, -4) [vertex, label=above left:$21$, fill=blue] {};
\node (v18) at (2, -5) [vertex, label=above left:$20$, fill=blue] {};
\node (v19) at (3,0) [vertex, label=above left:$35$, fill=blue] {};
\node (v20) at (3,-1) [vertex, label=above left:$34$, fill=blue] {};
\node (v21) at  (3,-2) [vertex, label=above left:$33$] {};
\node (v22) at (3,-3) [vertex, label=above left:$32$] {};
\node (v23) at (3, -4) [vertex, label=above left:$31$] {};
\node (v24) at (3, -5) [vertex, label=above left:$30$, fill=blue] {};

\node (v25) at (4,0) [vertex, label=above left:$45$, fill=blue] {};
\node (v26) at (4,-1) [vertex, label=above left:$44$, fill=blue] {};
\node (v27) at  (4,-2) [vertex, label=above left:$43$, fill=blue] {};
\node (v28) at (4,-3) [vertex, label=above left:$42$] {};
\node (v29) at (4, -4) [vertex, label=above left:$41$] {};
\node (v30) at (4, -5) [vertex, label=above left:$40$] {};
\node (v31) at (5,0) [vertex, label=above left:$55$, fill=blue] {};
\node (v32) at (5,-1) [vertex, label=above left:$54$, fill=blue] {};
\node (v33) at  (5,-2) [vertex, label=above left:$53$, fill=blue] {};
\node (v34) at (5,-3) [vertex, label=above left:$52$, fill=blue] {};
\node (v35) at (5, -4) [vertex, label=above left:$51$] {};
\node (v36) at (5, -5) [vertex, label=above left:$50$] {};

\foreach[evaluate=\y using int(\x-1)] \x in {2, 3, 4, 5, 6, 8, 9, 10, 11, 12, 14, 15, 16, 17, 18, 20, 21, 22, 23, 24, 26, 27, 28, 29, 30, 32, 33, 34, 35, 36}
\draw (v\y) to (v\x);

\foreach[evaluate=\y using int(\x+6), evaluate=\z using int(\y+6), evaluate=\w using int(\z+6), evaluate=\p using int(\w+6)] \x in {1, 2, 3, 4, 5, 6}
\draw  (v\p) to (v\w) to (v\z) to (v\y) to (v\x);

\foreach[evaluate=\y using int(\x+6)] \x in {25, 26, 27, 28, 29, 30}
\draw (v\y) to (v\x);

\end{tikzpicture}
\caption{An FPDS and SPDS $S$ in blue }
\label{fig:fpds}
\end{center}
\end{minipage}
\end{center}
\end{figure}

In Figure \ref{fig:pds}, the set $S= \{ 04, 01 \}$ is a PDS, while in Figure \ref{fig:fpds}, the blue vertices represent an FPDS and an SPDS (since after the dominating step, all vertices will be monitored except the main diagonal).  Thus, $\gamma_p(G) \leq 2$, but $\fgp(G) \geq 20$.  

Given a graph $G$ with $S \subseteq V$, we use $G[S]$ to denote the graph induced by the set $S$.  Given graphs $G$ and $H$, the \emph{join of $G$ and $H$}, denoted $G \vee H$,  has the vertex set $V(G) \cup V(H)$ and edge set $E(G) \cup E(H) \cup \{ \{u,v\} : u \in V(G) \mbox{ and } v \in V(H)\}$.  The \emph{vertex connectivity} $\kappa(G)$ of graph $G$ is the minimum number of vertices whose removal causes $G$ to be disconnected.  

In this paper, we determine the computational complexity of testing $\fgp(G) \geq k$ and find graphs with extreme values of $\fgp(G)$.  We present a list of graphs that have $\fgp(G)=0$, which is a particularly interesting case, since $\fgp(G)=0$ implies that any nonempty set of vertices in $G$ is a PDS.  We also discuss the relationship between $\fgp(G)$ and some related parameters in the literature.

\section{Motivation and related parameters}

The idea of failed power domination on graphs is motivated by the need to monitor electric power networks.  In \cite{baldwin1993power}, the authors describe the problem of observing a power system while minimizing the number of measurement devices known as phasor measurement units (PMUs) on the network. A PMU measures the voltage and phase angle, and allows for synchronization \cite{nuqui2005phasor}, which is one strategy described in \cite{heydt2001solution} for making the power grid more robust.  If a PMU measures the voltage and phase angle of vertex $v$ (or edge $e$), then $v$ (or $e$) is said to be \emph{observed}.  The vertex on which a PMU is placed is observed, as are its incident edges and adjacent vertices.  In addition, any vertex that is incident to an observed edge is observed; any edge joining two observed vertices is observed; finally, from Kirchhoff's Law, given an observed vertex $v$ with $k$ incident edges, if $k-1$ of the edges are observed, then all $k$ are observed.  In \cite{haynes2002domination} the authors formulate and investigate this problem as a graph theoretic problem.  Later, Kneis et al. \cite{kneis2006parameterized} showed that the problem can be simplified to omit any reference to edges.  The formal set definition of $\mathcal{P}^i(S)$ was introduced in \cite{aazami2010domination}.  

Under the model described in \cite{haynes2002domination}, the power domination number $\gamma_p(G)$ gives the minimum number of PMUs required to observe a power network represented by graph $G$.  The power domination number has been studied for multiple families of graphs \cite{benson2018zero, ferrero2017note, varghese2018power}, as has the complexity of $\gamma_p(G)$ \cite{haynes2002domination}.   On the other hand, the failed power domination number $\fgp(G)$ that we defined above gives a worst case scenario: what is the maximum number of PMUs that we could use on a given network represented by $G$, but fail to observe the full network?  In addition, $\fgp(G)+1$ gives us the minimum number of PMUs necessary to successfully observe the full network no matter where we place the PMUs.  

The concept of zero forcing, while related to power domination, was introduced in 2008  in the context of minimum rank problems \cite{aim2008zero,barioli2010zero} as well as quantum networks in 2007  \cite{burgarth2009local, burgarth2013zero, burgarth2007full, burgarth2009indirect, severini2008nondiscriminatory}.
Zero forcing acts like power domination, but without the domination step.  
That is, given a set $S$, $\mathcal{Q}^0(S) = S$, and for $i \in \mathbb{N}_0$, $\mathcal{Q}^{i+1}(S)= \mathcal{Q}^i(S) \bigcup \left\{ N[v] : v \in \mathcal{Q}^i(S) \mbox{ and } \left| N[v] \backslash \mathcal{Q}^i(S) \right| = 1 \right\}$.  Note that there exists an $i_0$ such that for all $j>i_0$, $\mathcal{Q}^j(S) = \mathcal{Q}^{i_0}(S)$, so we write $\mathcal{Q}^{\infty}(S)=\mathcal{Q}^{i_0}(S)$.  If $\mathcal{Q}^{\infty}(S) = V$, then $S$ is a \emph{zero forcing set}.  Otherwise, $S$ is a \emph{failed zero forcing set}. 
The smallest cardinality of any zero forcing set in $G$ is the \emph{zero forcing number} $\Z(G)$, and the largest cardinality of any failed zero forcing set is the \emph{failed zero forcing number} $\F(G)$ \cite{ansill2016failed, fetcie2015failed}.  
Complexity results for failed zero forcing were established in \cite{shitov2017complexity}.

\begin{lem}
For a graph $G=(V,E)$, suppose $S \subseteq S' \subseteq V$.  Then, 
\begin{enumerate}
\item $\mathcal{Q}^{\infty}(S) \subseteq \mathcal{Q}^{\infty}(S')$, \label{monotonicityforq} and
\item $\mathcal{P}^{\infty}(S) \subseteq \mathcal{P}^{\infty}(S')$. \label{monotonicityforp}
\end{enumerate}  
\end{lem}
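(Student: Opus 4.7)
The plan is to prove both parts simultaneously by induction on the step index $i$, showing the stronger statement that $\mathcal{Q}^i(S) \subseteq \mathcal{Q}^i(S')$ (respectively $\mathcal{P}^i(S) \subseteq \mathcal{P}^i(S')$) for every $i \geq 0$. Since the sequences stabilize (both are nondecreasing and bounded by $V$), passing to the stabilization index yields the desired inclusion for $\mathcal{Q}^{\infty}$ and $\mathcal{P}^{\infty}$.

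For part \ref{monotonicityforq}, the base case $i=0$ is immediate: $\mathcal{Q}^0(S)=S\subseteq S'=\mathcal{Q}^0(S')$. For the inductive step, assume $\mathcal{Q}^i(S)\subseteq\mathcal{Q}^i(S')$ and take $w\in\mathcal{Q}^{i+1}(S)$. If $w$ already lies in $\mathcal{Q}^i(S)$, the induction hypothesis places it in $\mathcal{Q}^i(S')\subseteq\mathcal{Q}^{i+1}(S')$. Otherwise there is $v\in\mathcal{Q}^i(S)$ with $N[v]\setminus\mathcal{Q}^i(S)=\{w\}$. By the hypothesis, $v\in\mathcal{Q}^i(S')$. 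Either $w\in\mathcal{Q}^i(S')$, in which case we are done, or $w\notin\mathcal{Q}^i(S')$; in the latter subcase, every other vertex of $N[v]$ lies in $\mathcal{Q}^i(S)\subseteq\mathcal{Q}^i(S')$, so $N[v]\setminus\mathcal{Q}^i(S')=\{w\}$ and $w$ is forced at step $i+1$ from $S'$. This closes the induction.

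For part \ref{monotonicityforp}, the argument is essentially identical, but the base case now uses the observation that $S\subseteq S'$ implies $N[S]\subseteq N[S']$, so $\mathcal{P}^0(S)\subseteq\mathcal{P}^0(S')$. The inductive step copies the case analysis above verbatim, since the propagation rule in the definition of $\mathcal{P}^{i+1}$ has the same shape as the forcing rule for $\mathcal{Q}^{i+1}$.

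The only subtle point, and thus the principal obstacle, is the case in the inductive step where a witness $v$ which forces in the smaller run no longer has a \emph{unique} outside neighbor in the larger run because $w$ itself has already been absorbed into $\mathcal{Q}^i(S')$ or $\mathcal{P}^i(S')$; this is precisely why the induction is split into the two subcases ($w\in\mathcal{Q}^i(S')$ versus $w\notin\mathcal{Q}^i(S')$) above. Beyond this bookkeeping, the argument is mechanical.
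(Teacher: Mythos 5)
Your proof is correct and follows essentially the same route as the paper: induction on the step index showing $\mathcal{Q}^i(S)\subseteq\mathcal{Q}^i(S')$ (and likewise for $\mathcal{P}^i$), with the base case for part 2 handled via $N[S]\subseteq N[S']$. Your explicit split into the subcases $w\in\mathcal{Q}^i(S')$ versus $w\notin\mathcal{Q}^i(S')$ is a slightly more careful rendering of the same inductive step the paper writes more tersely.
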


\begin{proof}
Suppose $S \subseteq S' \subseteq V$.  Then $\mathcal{Q}^0(S) \subseteq  \mathcal{Q}^0(S')$.  Assume $\mathcal{Q}^k(S)  \subseteq \mathcal{Q}^k(S')$.  If $u \in \mathcal{Q}^{k+1}(S)$, then either $u \in \mathcal{Q}^k(S)$, implying $u \in \mathcal{Q}^{k+1}(S')$, or there exists $v \in \mathcal{Q}^k(S)$ such that $N[v] \backslash \mathcal{Q}^k(S) =\{ u\}$ giving us $N[v] \backslash \{u\} \subseteq \mathcal{Q}^k(S')$.  Thus $u \in \mathcal{Q}^{k+1}(S')$.  Hence $\mathcal{Q}^{i}(S) \subseteq \mathcal{Q}^{i}(S')$ for any $i \in \mathbb{N}_0$, giving us  $\mathcal{Q}^{\infty}(S) \subseteq \mathcal{Q}^{\infty}(S')$.  

To prove \ref{monotonicityforp},   let $u \in \mathcal{P}^0(S)$.  Then $u \in N[v]$ for some $v \in S$.  Since $S \subseteq S'$, we have $u \in \mathcal{P}^0(S')$.  Thus, $\mathcal{P}^0(S) \subseteq \mathcal{P}^0(S')$.  The remainder of the proof is identical to the proof  of \ref{monotonicityforq}.  
\end{proof}

Since any set $S \subseteq V$ is a subset of the set of vertices it dominates, we have the following observation.

\begin{obs}
$\fgp(G) \leq \F(G)$.  
\end{obs}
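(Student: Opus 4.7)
The plan is to take any maximum failed power dominating set and produce from it a failed zero forcing set of at least the same size, which establishes the inequality.

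Let $S$ be a failed power dominating set of $G$ with $|S| = \fgp(G)$, so $\mathcal{P}^{\infty}(S) \subsetneq V$. The key idea is to consider the set $S' := N[S] = \mathcal{P}^0(S)$. First I would observe that $S \subseteq S'$, so $|S'| \geq |S|$. I would also note that because $S$ fails to power dominate $G$, we cannot have $N[S] = V$ (otherwise $\mathcal{P}^0(S) = V$ would already make $S$ a PDS), so $S' \subsetneq V$.

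Next I would argue that $S'$ is a failed zero forcing set. Unpacking the two dynamics: for zero forcing on $S'$, we have $\mathcal{Q}^0(S') = S' = N[S] = \mathcal{P}^0(S)$, and the propagation rule in the inductive step is \emph{literally the same rule} used to pass from $\mathcal{P}^i(S)$ to $\mathcal{P}^{i+1}(S)$ in the definition of power domination. A straightforward induction on $i$ therefore gives $\mathcal{Q}^i(S') = \mathcal{P}^i(S)$ for every $i \geq 0$, and hence $\mathcal{Q}^{\infty}(S') = \mathcal{P}^{\infty}(S) \subsetneq V$. Thus $S'$ is a failed zero forcing set of $G$, so $|S'| \leq \F(G)$.

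Combining these two bounds yields $\fgp(G) = |S| \leq |S'| \leq \F(G)$, as desired. There is no real obstacle here; the only subtle point, and the one I would be careful to state explicitly, is the identification $\mathcal{Q}^0(S') = \mathcal{P}^0(S)$, which is precisely the sentence preceding the observation that ``any set $S \subseteq V$ is a subset of the set of vertices it dominates.'' Everything else is an immediate consequence of the definitions.
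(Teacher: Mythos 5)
Your proof is correct and rests on exactly the fact the paper invokes, namely that $S \subseteq N[S] = \mathcal{P}^0(S)$ while the zero forcing and power domination propagation rules coincide from that point on. The only cosmetic difference is that you certify the (possibly larger) set $N[S]$ as a failed zero forcing set via the identity $\mathcal{Q}^i(N[S]) = \mathcal{P}^i(S)$, whereas the paper's one-line justification amounts to certifying $S$ itself, using $\mathcal{Q}^{\infty}(S) \subseteq \mathcal{Q}^{\infty}(N[S]) = \mathcal{P}^{\infty}(S)$ (the containment coming from the monotonicity lemma); both yield $\fgp(G) \leq \F(G)$.
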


\section{Complexity}

In this section, we show that it is NP-hard to determine whether  $G$ has a failed power dominating set of cardinality at least $k$.  We use a similar technique to the one used in \cite{shitov2017complexity} to show NP-completeness of failed zero forcing parameters.  

\vskip0.2in 
FAILED POWER DOMINATING SET (FPDS), $(G,m)$

Instance: Graph $G=(V,E)$ and a positive integer $m$ 

Question: Does $G$ have a proper stalled subset of cardinality at least $m$?

\vskip0.2in

To prove that FPDS is NP-hard, we construct a polynomial reduction from the well-known NP-complete problem, INDEPENDENT SET, which remains NP-complete when restricted to connected graphs \cite{shitov2017complexity}.

\vskip0.2in

INDEPENDENT SET, $(G,k)$

Instance: Connected graph $G=(V,E)$  and a non-negative integer $k$. 

Question: Does $G$ contain an independent set of cardinality $k$?

\vskip0.2in

The domination number of a path on $k$ vertices, $\gamma(P_k)$,  is known to be $ \lceil k/3 \rceil$ \cite{frendrup2009upper}.

\begin{lem}
Let $G$ be a graph that contains an induced subgraph $P_k$, where $k \geq 3$, all internal vertices of $P_k$ have degree $2$ in $G$, one end vertex of $P_k$ has degree $1$ in $G$, and the other end vertex, $v$, has degree at least $3$ in $G$.  If $S$ is an SPDS containing at least one vertex of $P_k$, then $|S \cap P_k| \geq \gamma(P_k) = \lceil k/3 \rceil $.  
If $S$ is maximally stalled and contains at least one vertex of $P_k$, then $|S \cap P_k| \geq k-1$.  \label{lem:pathspread}\end{lem}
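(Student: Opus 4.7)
Label the path as $v_1v_2\cdots v_k$, with $v_1$ the pendant end (degree $1$ in $G$), $v_k=v$ the high-degree end ($\deg_G(v_k)\ge 3$), and each internal $v_i$ of degree $2$ in $G$. Put $T=S\cap V(P_k)$ and assume $T\ne\emptyset$.

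The plan is to first establish the structural claim that the whole path is monitored after the domination step, i.e.\ $V(P_k)\subseteq\mathcal{P}^0(S)$. Let $j=\min\{i:v_i\in T\}$ and suppose for contradiction that $j\ge 3$. Then $v_{j-2}$ is internal in $P_k$, so $N_G(v_{j-2})=\{v_{j-3},v_{j-1}\}$, and by minimality of $j$ neither of these is in $T$; hence $v_{j-2}\notin N[S]$. But $v_{j-1}\in N[v_j]\subseteq N[S]$ and $v_j\in S\subseteq N[S]$, so $N_G[v_{j-1}]\setminus N[S]=\{v_{j-2}\}$, which forces a propagation step at $v_{j-1}$ and contradicts the SPDS property. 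Hence $j\le 2$ and $v_1\in N[S]$; iterating the stalled condition at $v_1,v_2,\ldots,v_{k-1}$ (each of degree at most $2$ in $G$) then forces $v_2,v_3,\ldots,v_k\in N[S]$ in turn.

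For the SPDS bound, note that for $i\le k-1$ we have $N_G[v_i]\subseteq V(P_k)$, so $v_i\in N[S]$ is equivalent to $v_i\in N_{P_k}[T]$; thus $T$ dominates $\{v_1,\ldots,v_{k-1}\}$ in $P_k$. Running the symmetric minimality analysis from the right end with $j'=\max\{i:v_i\in T\}$, and splitting on whether $j'\in\{k-2,k-1,k\}$, shows that $T$ further covers $v_k$, making $T$ a dominating set of all of $P_k$. Therefore $|T|\ge\gamma(P_k)=\lceil k/3\rceil$.

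For the maximally stalled bound, take $S$ maximally stalled and not itself a PDS. If some $v_i\in V(P_k)\setminus S$ has $i\le k-1$, then by the structural claim $N_G[v_i]\subseteq V(P_k)\subseteq N[S]$, so $N[S\cup\{v_i\}]=N[S]$; since the propagation rule depends only on $\mathcal{P}^0$ and on $G$, we get $\mathcal{P}^\infty(S\cup\{v_i\})=\mathcal{P}^\infty(S)=\mathcal{P}^0(S)\ne V$, so $S\cup\{v_i\}$ is not a PDS, contradicting maximality. Hence $\{v_1,\ldots,v_{k-1}\}\subseteq S$ and $|T|\ge k-1$. The main obstacle is the last step of the SPDS bound: because $\deg_G(v_k)\ge 3$, the fact $v_k\in N[S]$ does not automatically force $v_k\in N_{P_k}[T]$, and ruling out the scenario where $v_k$ is monitored purely through external $S$-vertices needs a delicate right-end analysis using the stalled condition at $v_{k-1}$ together with a short case check on whether $v_k$ or $v_{k-1}$ lies in $T$.
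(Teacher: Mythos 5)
Your structural claim that $V(P_k)\subseteq \mathcal{P}^0(S)$ and your argument for the maximally stalled bound are correct, and they follow essentially the same route as the paper: the paper phrases the structural step as ``two adjacent monitored path vertices would propagate along the entire path, so stalledness forces all of $P_k$ into $\mathcal{P}^0(S)$,'' and its maximality step likewise rests on the fact that adding a vertex of $\{v_1,\dots,v_{k-1}\}$ whose closed neighborhood is already monitored cannot change $\mathcal{P}^\infty$. (The small slip where you call $v_{j-2}$ ``internal'' even when $j=3$ is harmless, since $N_G[v_1]=\{v_1,v_2\}\subseteq V(P_k)$ in that case too.)

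The gap you flag in the first bound, however, is not a deferrable technicality: that step genuinely fails, and the assertion $|S\cap P_k|\ge\lceil k/3\rceil$ is false as stated. Your argument correctly yields only that $T$ dominates $\{v_1,\dots,v_{k-1}\}$ in $P_k$, hence $|T|\ge\lceil (k-1)/3\rceil$, which for $k\equiv 1\pmod 3$ is strictly smaller than $\gamma(P_k)$. Concretely, take $k=4$, make $v_4$ adjacent to external vertices $w,x,y$, make each of $w,x,y$ adjacent to two further vertices $a,b$, and set $S=\{v_2,w\}$. Then $\mathcal{P}^0(S)=\{v_1,v_2,v_3,v_4,w,a,b\}$, and every monitored vertex has either zero or two unmonitored neighbors (the unmonitored vertices being $x$ and $y$), so $S$ is a properly stalled SPDS meeting $P_4$, yet $|S\cap P_4|=1<2=\gamma(P_4)$. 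This shows the ``delicate right-end analysis'' you appeal to cannot succeed: when $\max\{i:v_i\in T\}=k-2$, the vertex $v_k$ may be monitored through an external neighbor in $S$, and the stalled condition at $v_{k-1}$ yields no contradiction because all of $N[v_{k-1}]$ is already monitored. The paper's own proof of this half has the same hole---it asserts that fewer than $\gamma(P_k)$ path vertices in $S$ would force a propagation, without accounting for external domination of $v_k$. Fortunately, the only part of the lemma invoked later is the maximally stalled bound $|S\cap P_k|\ge k-1$, which depends only on the structural claim and which both you and the paper establish correctly.
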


\begin{proof}
Note that if there are at least two adjacent vertices in $\mathcal{P}^0(S) \cap P_k$, then for some $i \geq 0$, $V(P_k) \subseteq \mathcal{P}^i(S)$.  If there is a vertex in $S \cap P_k$, then after the domination step, there are at least two adjacent vertices from $P_k$ in $\mathcal{P}^0(S)$.  Thus, if $S$ is stalled, it must be that at least $\gamma(P_k)$ vertices on the path are in $S$; otherwise, $\mathcal{P}^1(S) \backslash \mathcal{P}^0(S)$ is nonempty.      

Since at least $\gamma(P_k)$ vertices on the path $P_k$ are in $S$, it follows that $\mathcal{P}^0(S)$ contains all vertices in $P_k$.  Thus, if $S$ is maximally stalled and contains at least one vertex of $P_k$, it must contain all vertices other than $v$.  That is, $|S \cap P_k| \geq k-1$.
\end{proof}

To prove the following lemma,  we construct a polynomial reduction from INDEPENDENT SET.  An example reduction instance is shown in Figures \ref{fig:independent} and \ref{fig:fpdsgprime}.

\begin{figure}[h!]
\begin{center}
\begin{minipage}{2in}
\begin{center}
\begin{tikzpicture}[auto,scale=1.2]
\tikzstyle{vertex}=[draw, circle, inner sep=0.8mm]
\node (v1) at (0,0) [vertex, fill=blue] {};
\node (v2) at (1,0) [vertex] {};
\node (v3) at (2,0.5) [vertex, fill=blue] {};
\node (v4) at (2,-0.5) [vertex] {};
\draw (v1) --   (v2) node [midway] {$e$};
\draw (v3) -- (v2) node [above, midway] {$f$};
\draw (v4) -- (v2) node [midway] {$g$};
\draw (v3) -- (v4) node [midway] {$h$};;
\end{tikzpicture}
\caption{A graph $G$ with independent set of cardinality $2$ shown in blue}
\label{fig:independent}
\end{center}
\end{minipage}
\begin{minipage}{4in}
\begin{center}
\begin{tikzpicture}[auto, scale=2.6]
\tikzstyle{vertex}=[draw, circle, inner sep=0.8mm]
\node (v1) at (0,0) [vertex, fill=blue] {};
\node (v2) at (1,0) [vertex] {};
\node (v3) at (2,0.5) [vertex, fill=blue] {};
\node (v4) at (2,-0.5) [vertex] {};
\node (u0) at (0.5,0) [vertex, label=below:$v_{e_0}$] {};
\draw(v1)--(u0)--(v2);
\foreach \x in {1, 2}
\node (u\x) at (0.75-0.25*\x, 0.25+0.25*\x) [vertex, label=$v_{e_{\x}}$, fill=blue]{};
\draw (u0) -- (u1) node [midway, right] {$e_{1}$};
\foreach \x in {15, 16}
\node (u\x) at (3.75-0.25*\x, -2.75+0.25*\x) [vertex, label=$v_{e_{\x}}$, fill=blue]{};
\foreach[evaluate=\y using int(\x-1)]\x in {2, 16}
\draw (u\x) -- (u\y) node [midway, below left] {$e_{\x}$};
\draw [dotted]  (u2) to (u15);
\node (w0) at (1.5,0.25) [vertex, label=left:$v_{f_0}$] {};
\draw(v3)--(w0)--(v2);
\foreach \x in {1, 2}
\node (w\x) at (1.5-0.25*\x, 0.25+0.25*\x) [vertex, label=left:$v_{f_{\x}}$, fill=blue]{};
\foreach \x in {15,16}
\node (w\x) at (4.5-0.25*\x, -2.75+0.25*\x) [vertex, label=left:$v_{f_{\x}}$, fill=blue]{};
\foreach[evaluate=\y using int(\x-1)]\x in {1, 2, 16}
\draw (w\x) -- (w\y) node [midway] {$f_{\x}$};
\draw [dotted] (w2) to (w15);
\node (x0) at (1.5,-0.25) [vertex, label=right:$v_{g_{0}}$] {};
\draw(v4)--(x0)--(v2);
\foreach \x in {1, ...,2}
\node (x\x) at (1.5-0.25*\x, -0.25-0.25*\x) [vertex, label=$v_{g_{\x}}$, fill=blue]{};
\foreach \x in {15, 16}
\node (x\x) at (4.5-0.25*\x, 2.75-0.25*\x) [vertex, label=$v_{g_{\x}}$, fill=blue]{};
\foreach[evaluate=\y using int(\x-1)]\x in {1,2, 16}
\draw (x\x) -- (x\y) node [midway, below right] {$g_{\x}$};
\draw [dotted] (x2) to (x15);
\node (y0) at (2,0) [vertex, label=above left:$v_{h_0}$] {};
\draw(v4)--(y0)--(v3);
\foreach \x in {1, 2}
\node (y\x) at (2.5, 0.3-0.3*\x) [vertex, label=right:$v_{h_{\x}}$, fill=blue]{};
\draw (y0) -- (y1) node [midway, above] {$h_{1}$};
\foreach \x in {15, ...,16}
\node (y\x) at (2.5, 3.7-0.3*\x) [vertex, label=right:$v_{h_{\x}}$, fill=blue]{};
\foreach[evaluate=\y using int(\x-1)]\x in {2,  16}
\draw (y\x) -- (y\y) node [midway, left] {$h_{\x}$};
\draw [dotted] (y2) to (y15);
\node (x) at (1.5,0) [vertex, label=left:$x$]{};
\draw(x0)--(x);
\draw(y0)--(x);
\draw(w0)--(x);
\draw(u0) to [out=30, in=150] (x);
\end{tikzpicture}
\caption{The graph $G'$ with FPDS $S$ in blue, $|S|=66$}
\label{fig:fpdsgprime}
\end{center}
\end{minipage}
\end{center}
\end{figure}

\begin{lem}
FAILED POWER DOMINATING SET is NP-hard.  \label{lem:nphard}
\end{lem}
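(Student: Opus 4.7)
The plan is to reduce INDEPENDENT SET on connected graphs (NP-complete) to FPDS in polynomial time. Given an IS instance $(G, k)$ with $|V(G)| = n$ and $|E(G)| = q$, I will construct $G'$ as pictured in Figures \ref{fig:independent} and \ref{fig:fpdsgprime}: for each edge $e = \{u, v\}$ of $G$, subdivide $e$ by a new vertex $v_{e_0}$ adjacent to both $u$ and $v$, then attach to $v_{e_0}$ a pendant path $v_{e_0} v_{e_1} v_{e_2} \cdots v_{e_L}$ of length $L$ (with $L$ a suitable polynomial in $n$, e.g.\ $L = n + 1$). Finally, add a universal vertex $x$ adjacent to every $v_{e_0}$. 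With $m = k + qL$, I will prove that $G$ has an independent set of size $\geq k$ if and only if $G'$ has an FPDS of size $\geq m$.

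\textbf{Forward direction.} Given an independent set $I \subseteq V(G)$ with $|I| = k$, I set
\[
S = I \cup \bigcup_{e \in E(G)} \{v_{e_1}, v_{e_2}, \ldots, v_{e_L}\},
\]
so $|S| = k + qL = m$. A direct computation gives $\mathcal{P}^0(S) = S \cup \{v_{e_0} : e \in E(G)\}$, leaving $x$ and every $w \in V(G) \setminus I$ outside $\mathcal{P}^0(S)$. I then show no propagation occurs: the only candidate sources are the subdivision vertices $v_{e_0}$ with neighborhood $\{u, v, v_{e_1}, x\}$, and since $v_{e_1}$ is monitored but $x$ is not and (by independence of $I$) at least one of $u, v$ is also unmonitored, every $v_{e_0}$ has at least two unmonitored neighbors. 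Hence $\mathcal{P}^\infty(S) = \mathcal{P}^0(S) \neq V(G')$, so $S$ is an FPDS.

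\textbf{Reverse direction.} Now suppose $S$ is an FPDS of $G'$ with $|S| \geq m$. Without loss of generality I take $S$ to be maximal among FPDS. Let $T = S \cap V(G)$ and $U = V(G') \setminus \mathcal{P}^\infty(S) \neq \emptyset$. The key observation is that every monitored vertex $w \in \mathcal{P}^\infty(S)$ satisfies $|N(w) \cap U| \neq 1$, or else propagation would continue. Applying this to each subdivision vertex $v_{e_0}$---with neighborhood $\{u, v, v_{e_1}, x\}$---together with Lemma \ref{lem:pathspread} applied to each pendant path (an induced $P_{L+1}$ with degree-$1$ endpoint $v_{e_L}$ and higher-degree endpoint $v_{e_0}$), yields a structural dichotomy: either $U$ contains $x$ (the canonical case), or $U$ lives elsewhere in a configuration whose size can be upper-bounded strictly below $m$ by choosing $L$ large enough. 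In the canonical case $x \in U$, the ``at most one unmonitored neighbor'' rule applied to $v_{e_0}$ forces that for each edge $e = \{u, v\}$, not both $u$ and $v$ lie in $T$; otherwise $v_{e_0}$'s only unmonitored neighbor would be $x$, propagating to $x$ and contradicting $x \in U$. Hence $T$ is independent in $G$, and the bound $|S \cap V(P^e)| \leq L + 1$ on each pendant path combined with $|S| \geq k + qL$ yields $|T| \geq k$.

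\textbf{Main obstacle.} The principal difficulty is the structural case analysis in the reverse direction: ruling out non-canonical FPDS configurations where $x$ is monitored but some other unmonitored vertex sits on a pendant path or in $V(G)$. The polynomial $L$ must be chosen large enough that, via Lemma \ref{lem:pathspread}'s control on $|S \cap V(P^e)|$ pendant path by pendant path, all such alternatives satisfy $|S| < k + qL$, thereby forcing the canonical structure. Once $x \in U$ is established, the independence of $T$ and the cardinality bound $|T| \geq k$ follow from a clean propagation-and-counting argument.
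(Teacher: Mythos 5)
Your construction and forward direction coincide with the paper's (subdivide each edge, hang a long pendant path off each subdivision vertex, add a vertex $x$ adjacent to all subdivision vertices, take $S = I \cup (\text{pendant paths})$), so that half is fine. The reverse direction, however, is where essentially all of the work lies, and what you label the ``main obstacle'' is not a technicality you can defer: it contains concrete errors. First, your counting does not close. With only $|S \cap V(P^e)| \leq L+1$ you get $|T| \geq |S| - q(L+1) - 1 \geq k - q - 1$, not $|T| \geq k$. To recover $|T| \geq k$ you must prove that $x \notin S$ and that no subdivision vertex $v_{e_0}$ lies in $S$; the paper proves both, and not by counting but by a propagation argument that uses the connectivity of $G$: if $v_{e_0} \in S$ (or $x \in S$) then $x$ and the endpoints of $e$ become monitored, and walking along a path in $G$ from a monitored original vertex to an unmonitored one produces a subdivision vertex with exactly one unmonitored neighbor, contradicting stalledness. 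Your proposed dichotomy --- ``either $x$ is unmonitored, or the configuration is small for $L$ large'' --- is false as a counting statement: e.g.\ $S = \{x\} \cup (\text{all pendant-path vertices}) \cup I$ with $|I| = k-1$ already has cardinality $m$, and ruling it out requires exhibiting a propagation (via connectivity), not bounding its size. Relatedly, your suggested $L = n+1$ is too small even for the one step that genuinely is a counting argument, namely that every pendant path meets $S$: the complement of $S$ can have up to $|E| + n + 1 - k$ vertices, which can exceed $L+1 = n+2$; the paper takes $L = n^2$ precisely so that $n^2 + 1 > |E| + n - 1$.

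Two further gaps. Your independence argument for $T$ assumes that $v_{e_0}$ and $v_{e_1}$ are already monitored (otherwise $v_{e_0}$ has two unmonitored neighbors and nothing forces $x$ to propagate), so it cannot be run before the structural analysis you deferred; the paper first establishes $\bigcup_i V'_i \subseteq \mathcal{P}^0(S)$ via the pigeonhole step and Lemma \ref{lem:pathspread}. And Lemma \ref{lem:pathspread} is stated for \emph{stalled} (indeed maximally stalled) sets, not for arbitrary failed power dominating sets: an FPDS containing a single interior vertex of a pendant path is not stalled (the whole path gets monitored by propagation before the process halts at $v_{e_0}$), so the conclusion $|S \cap P(e)| \geq \gamma(P_k)$ does not apply to it. You either need to phrase the decision problem in terms of properly stalled sets, as the paper does, or supply an argument reducing the FPDS question to the stalled case. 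As written, the reverse direction is a plan rather than a proof, and the plan's stated mechanism (make $L$ large and count) cannot by itself deliver the canonical structure.
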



\begin{proof}

Suppose $(G,k)$ with $n=|V|  \geq 3$,  $k \geq 2$ is an instance of INDEPENDENT SET.  We construct from it an instance $(G',m)$  of FPDS for $m= n^2 |E|+k$.  Let $U$ be an independent set of $G$.  Then $U'= U \cup V'_1 \cup V'_2 \cup \cdots \cup V'_{n^2}$ is an SPDS of cardinality $n^2|E| + k$ in $G'$, where $G'=(V',E')$ is constructed as follows.  

\begin{enumerate}
\item $V \subseteq V'$.
\item Subdivide every edge of $G$.  That is, for each $e = \{ u, v\} \in E$, add a vertex $v_{e_0}$ to $V'$, and let $\{u, v_{e_0}\}, \{v_{e_0}, v\} \in E'$. Let $V'_0$ denote these added vertices, and $E'_0$ the added edges.
\item For each  $e = \{ u, v\} \in E$, add vertices $v_{e_1}$ through $v_{e_{n^2}}$ to $V'$.  For each $i=1, 2, \ldots, n^2$, let $V'_i$ denote all $v_i \in V'$, and add edge $e_i = \{v_{e_{i-1}}, v_{e_{i}}\}$ to $E'$.  Let $P(e)$ denote the path from $v_{e_0}$ to $v_{e_{n^2}}$. Let the set of all such paths be denoted $\rho$.  
\item Add a vertex $x$ to $V'$.  For each vertex $v_{e_0} \in V'_0$, add $\{x, v_{e_0}\}$ to $E'$.  
\end{enumerate}

To see that $U'$ is an SPDS in $G'$, note that  $\mathcal{P}^0(U')= U \cup V'_0 \cup V'_1 \cup V'_2 \cup \cdots \cup V'_{n^2}$.  
If $U'$ is not an SPDS, then $\mathcal{P}^1(U') \backslash \mathcal{P}^0(U')$ is nonempty.  
The only vertices in $V' \backslash   \mathcal{P}^0(U')$ are $x$ and the vertices from $V \backslash U$.  We know that $N_{G'}(x)=V_0'$, but each vertex in $V'_0$ has at least one other neighbor in $V \backslash (U  \cup  \mathcal{P}^0(S))$ (since $U$ is an independent set in $G$).  Hence, $x \notin \mathcal{P}^1(U')$.  Similarly, for any vertex $v \in V \backslash U$, the neighborhood $N_{G'}(v)$ is contained in $V_0'$.  But for each $v_{e_0} \in V'_0$,  $N_{G'}\left(v_{e_0}\right)$ includes $x$ and one vertex from $V \backslash U$.   Hence if $v \in V \backslash U$, then $v \notin \mathcal{P}^1(U')$, and $U'$ is stalled.  

Suppose that $S\subseteq V(G')$ is maximally stalled with $|S| \geq n^2|E| + 2$.  We will show that for each path $P(e) \in \rho$, $|S \cap P(e)| \geq n^2$.  Since $|V'| = (n^2 + 1)|E| + n + 1$, there are at most $|E| + n -1$ vertices in $V' \backslash S$.  Each path $P(e)$ has $n^2 + 1$ vertices.  Note that 
$n^2 + 1 - (|E| + n -1)  \geq n^2 - n + 2 - \frac{n(n-1)}{2} >1$, and
thus, $P(e)$ contains at least one vertex in $S$.  By Lemma \ref{lem:pathspread}, then, $|S \cap P(e)| \geq n^2$, implying that $\cup_{i=0}^{n^2} V'_i \subseteq \mathcal{P}^0(S)$.  

We show that $V_0' \cap S = \emptyset$.  Without loss of generality suppose $v_{e_0} \in S$.   Then $N_{G'}(v_{e_0}) = \{u, v, x, v_{e_1}\}$ where $e=\{u,v\}$, so $\{ u, v, x\} \subseteq \mathcal{P}^0(S)$.  Since $G$ is connected, $G'$ is also connected.  Thus, there is a path in $G'$ from $u$  to any vertex in $V$.  Since $S$ is properly stalled and $(V' \backslash V)\subseteq \mathcal{P}^0(S)$, it follows that there must be some vertex $y \in V$ such that $y \notin \mathcal{P}^0(S)$.  Then, on the path from $u$ to $y$, there exists  $\hat{e}=\{w,z\} \in E$ with $w \in \mathcal{P}^0(S)$ and $z \notin \mathcal{P}^0(S)$.   Consider the vertex $v_{\hat{e}_0} \in V'_0$.  The set $N_{G'}[v_{\hat{e}_0}] \backslash \mathcal{P}^0(S)$ consists only of the vertex $z$ (since we just noted that $x \in \mathcal{P}^0(S)$),  so $z \in \mathcal{P}^1(S) \backslash \mathcal{P}^0(S)$, a contradiction of $S$ being stalled.  Hence, $V'_0 \cap S = \emptyset$.  
Since we know that for each path $P(e) \in \rho$, $|P(e) \cap S| \geq n^2$, it follows that $\cup_{i=1}^{n^2} V'_i \subseteq S$.  

Now, we show that $x \notin S$.  Suppose $x \in S$.  Note that $V \cap S$ is nonempty, because we assumed that $|S| \geq n^2|E|+2$.  Also, $V \backslash S$ is nonempty since we showed that $\cup_{i=0}^{n^2} V'_i \subseteq \mathcal{P}^0(S)$.  Since we're assuming that $x \in S$, if $V \subseteq S$, then $\mathcal{P}^0(S) = V'$, contradicting the assumption that $S$ is properly stalled.  Hence, there exists an edge $e=\{u,v\} \in E$ with $u \in S$ and $v \notin S$.    Then the vertex $v_{e_0}$ has $N_{G'}[v_{e_0}] \backslash \mathcal{P}^0(S) = \{v\}$, and $\mathcal{P}^1(S) \backslash \mathcal{P}^0(S)$ is nonempty, a contradiction of $S$ being stalled.  Hence, $x \notin S$.  

Finally, we will show that $S \cap V$ is an independent set of $G$.  Suppose there exists an edge $e=\{u, v \} \in E$ for some $u, v \in S$.  Then $v_{e_0}$ has $N_{G'}[v_{e_0}] \backslash \mathcal{P}^0(S) = \{x\}$, and $x \in \mathcal{P}^1(S) \backslash \mathcal{P}^0(S)$, a contradiction of $S$ being stalled.  Hence, $S\cap V$ is an independent set in $G$.  

This gives us that for any maximal properly stalled subset $S$ of $V'$, 
$$|S| = n^2 |E| + t,$$ 
where $t$ is the order of independent set $S \cap V$.  Thus $G'$ has an SPDS of order $m=n^2|E|+k$ if and only if $G$ has an independent set of order $k$. The construction of $G'$ is polynomial and thus this completes our proof that FPDS is NP-hard.
\end{proof}

For a graph $G$, positive integer $k$, and $S\subseteq V$ with $|S| \leq k$, it is verifiable in polynomial time whether or not $S$ is a PDS  \cite{haynes2002domination}.  Thus it is verifiable in polynomial time whether $S$ is an FPDS, completing the proof of the following theorem.  

\begin{thm}
FAILED POWER DOMINATING SET is NP-complete.  
\end{thm}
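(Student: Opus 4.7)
The plan is straightforward: Lemma \ref{lem:nphard} already establishes NP-hardness of FPDS, so the only remaining task is to place FPDS in NP by exhibiting a polynomial-time verifiable certificate. The natural certificate for the instance $(G,m)$ is a subset $S \subseteq V(G)$ itself, together with the claim that $|S| \geq m$ and that $S$ is properly stalled (equivalently, not a PDS).

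First I would verify that $|S| \geq m$, which is immediate. Next I would compute $\mathcal{P}^\infty(S)$ directly from the definition: initialize $\mathcal{P}^0(S) = N[S]$, and at each step scan every $v \in \mathcal{P}^i(S)$ to test whether $|N[v] \setminus \mathcal{P}^i(S)| = 1$, adding such singletons to form $\mathcal{P}^{i+1}(S)$. Since $\mathcal{P}^i(S) \subsetneq \mathcal{P}^{i+1}(S)$ whenever the process has not stabilized, we reach $\mathcal{P}^\infty(S)$ in at most $|V|$ iterations, and each iteration runs in time polynomial in $|V|+|E|$. Then $S$ is a (proper) FPDS precisely when $\mathcal{P}^\infty(S) \neq V$, which is a single set comparison. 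This exactly matches the polynomial-time PDS recognition procedure cited from \cite{haynes2002domination}.

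Combining membership in NP with the NP-hardness established in Lemma \ref{lem:nphard} yields NP-completeness. There is no genuine obstacle here; all of the real work was discharged by the reduction from INDEPENDENT SET in the preceding lemma, and the present step is only the routine verification that the ``not-a-PDS'' condition can be checked efficiently from a polynomial-size certificate.
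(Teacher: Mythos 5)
Your proposal is correct and follows essentially the same route as the paper: NP-hardness is inherited from Lemma \ref{lem:nphard}, and membership in NP is witnessed by the candidate set $S$ itself, since whether $S$ is a PDS can be decided in polynomial time (the paper cites \cite{haynes2002domination} for this, while you spell out the iteration of $\mathcal{P}^i(S)$ explicitly, which stabilizes in at most $|V|$ rounds). The only minor imprecision is your parenthetical equating ``properly stalled'' with ``not a PDS'' --- these are not the same condition in general, though both are checkable in polynomial time from the same computation, so the verification argument goes through either way (the paper is equally loose on this point).
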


\section{Extreme values}

In this section, we characterize $n$-vertex graphs $G$ with $\fgp(G)\geq n-3$.  We also give some results for the case  $\fgp(G)=0$.  The next observation follows from the definition of PDS.

\begin{obs}
If $S$ is a PDS of $G$, then $\mathcal{P}^0(S) \backslash S$ is a zero forcing set of $G[V \backslash S]$.  \label{obs:domset}
\end{obs}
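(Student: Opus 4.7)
The plan is to set $T_0 = \mathcal{P}^0(S)\setminus S$, view it as an initial ``colored'' set in the induced subgraph $G[V\setminus S]$, and show by induction on $i$ that the sets of power-monitored vertices outside $S$ are contained in the zero-forcing closures of $T_0$ inside $G[V\setminus S]$. Concretely, letting $\mathcal{Q}^i$ denote the zero-forcing iteration taken \emph{in $G[V\setminus S]$}, I will prove
\[
\mathcal{P}^i(S)\setminus S \;\subseteq\; \mathcal{Q}^i(T_0) \qquad \text{for every } i \geq 0.
\]
Since $S$ is a PDS, $\mathcal{P}^{\infty}(S) = V$, so passing to $i\to\infty$ gives $V\setminus S \subseteq \mathcal{Q}^{\infty}(T_0)$, i.e.\ $T_0$ is a zero forcing set of $G[V\setminus S]$.

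The base case is immediate from the definition $\mathcal{Q}^0(T_0) = T_0 = \mathcal{P}^0(S)\setminus S$. For the inductive step, assume $\mathcal{P}^i(S)\setminus S \subseteq \mathcal{Q}^i(T_0)$ and take $w \in \mathcal{P}^{i+1}(S)\setminus S$. If $w \in \mathcal{P}^i(S)$ the inductive hypothesis finishes the case, so suppose there is $v \in \mathcal{P}^i(S)$ with $N_G[v]\setminus \mathcal{P}^i(S) = \{w\}$. The key observation is that $v \notin S$: indeed, $S \subseteq \mathcal{P}^0(S)\subseteq\mathcal{P}^i(S)$ implies $N_G[v]\subseteq N_G[S]=\mathcal{P}^0(S)\subseteq \mathcal{P}^i(S)$ whenever $v\in S$, which would make $N_G[v]\setminus\mathcal{P}^i(S)$ empty. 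Therefore $v \in \mathcal{P}^i(S)\setminus S \subseteq \mathcal{Q}^i(T_0)$.

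Now I translate the power-propagation from $v$ into a zero-forcing propagation in $G[V\setminus S]$. Since $\mathcal{Q}^i(T_0)\cup S \supseteq (\mathcal{P}^i(S)\setminus S)\cup S \supseteq \mathcal{P}^i(S)$, I obtain
\[
N_{G[V\setminus S]}[v]\setminus \mathcal{Q}^i(T_0) \;=\; \bigl(N_G[v]\setminus S\bigr)\setminus \mathcal{Q}^i(T_0) \;\subseteq\; N_G[v]\setminus \mathcal{P}^i(S) \;=\; \{w\}.
\]
Since $w\notin S$, $w\in N_{G[V\setminus S]}[v]$. So this set is either $\{w\}$, in which case the zero forcing rule applied to $v\in\mathcal{Q}^i(T_0)$ puts $w\in\mathcal{Q}^{i+1}(T_0)$, or it is empty, which means $w\in\mathcal{Q}^i(T_0)\subseteq\mathcal{Q}^{i+1}(T_0)$. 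Either way, $w\in\mathcal{Q}^{i+1}(T_0)$, completing the induction.

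The only subtlety I expect is the bookkeeping between the ambient graph $G$ (in which power-propagation occurs) and the induced subgraph $G[V\setminus S]$ (in which the zero-forcing closure is computed); once one records that $S$ is contained in every $\mathcal{P}^i(S)$ and therefore cannot serve as an active forcer, the containment of neighborhoods cascades cleanly and the argument goes through.
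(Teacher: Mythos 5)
Your proof is correct, and it is a careful, fully detailed version of the argument the paper leaves entirely implicit (the paper simply asserts that the observation ``follows from the definition of PDS''). The two points you isolate --- that a vertex of $S$ can never act as a forcer at a propagation step because $N_G[v]\subseteq \mathcal{P}^0(S)$, and that $\mathcal{Q}^i(T_0)\cup S\supseteq \mathcal{P}^i(S)$ so deleting $S$ only shrinks the set of uncolored neighbors --- are exactly the bookkeeping needed to make the observation rigorous.
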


\begin{thm} We have the following characterization of graphs with high values of $\fgp(G)$.   \label{thm:highvalues}
\begin{enumerate}
\item $\fgp(G) = n-1$ if and only if $G$ has an isolated vertex. \label{nminus1}
\item $\fgp(G) = n-2$ if and only if $G$ contains $K_2$ as a component, and no isolated vertices. \label{nminus2}
\item $\fgp(G) = n-3$ if and only if $G$ contains no components that are isolated vertices or $K_2$ and $G=(V,E)$ contains as an induced subgraph 
\begin{itemize}
\item  $P_3$ where only the middle vertex may be adjacent to other vertices in $V,$ or
\item $K_3$ where at most one of the vertices may be adjacent to other vertices in $V.$
\end{itemize} \label{nminus3}
\end{enumerate}
\end{thm}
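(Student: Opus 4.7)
Parts (1) and (2) follow fairly directly. For (1), note that $V$ itself is always a PDS, so $\fgp(G) \leq n-1$, with equality iff some $V\setminus\{v\}$ fails to be a PDS, which happens exactly when $v$ is isolated. For (2), I would first invoke (1) to rule out isolated vertices, then show that any FPDS $S$ of size $n-2$ forces $W = V\setminus S$ to be a $K_2$ component: a brief propagation argument shows neither vertex of $W$ can have a neighbor in $S$ (else the other would either be dominated and immediately propagated to, or would be isolated), so the only possible neighbor of each is the other one, which combined with non-isolation yields a $K_2$ component. The converse is routine.

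For (3) $(\Leftarrow)$, I would directly exhibit $S = V\setminus\{a,b,c\}$ as an FPDS of size $n-3$, where $\{a,b,c\}$ denotes the vertex set of the prescribed induced $P_3$ or $K_3$. In either case two of the three vertices have all their neighbors inside $\{a,b,c\}$ and therefore lie outside $\mathcal{P}^0(S)$, while the remaining vertex, even if dominated through external neighbors, still has both of the other two as unmonitored neighbors and therefore cannot propagate. Thus $\mathcal{P}^\infty(S) \ne V$. Combined with parts (1) and (2), which force $\fgp(G) \le n-3$ once isolated vertices and $K_2$ components are excluded, this yields $\fgp(G) = n-3$.

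The main work is (3) $(\Rightarrow)$. Assume $\fgp(G) = n-3$ with no isolated vertex and no $K_2$ component; fix an FPDS $S$ of size $n-3$ and set $T = \mathcal{P}^\infty(S)$ and $W = V \setminus S$. The key observation is that every $y \in T$ with a neighbor outside $T$ must have at least two such neighbors, since otherwise $y$ would propagate. Combined with non-isolation this forces $|V\setminus T| \ge 2$, and since $V\setminus T \subseteq W$ we have $|V\setminus T| \in \{2,3\}$. When $|V\setminus T|=3$, we have $T = S$, so every edge incident to $W$ lies inside $W$; since $G[W]$ has three non-isolated vertices it is either $P_3$ or $K_3$, appearing as a full component and matching the respective bullet. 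When $|V\setminus T| = 2$, write $V \setminus T = \{u,v\}$ and $W\cap T = \{w\}$; since $u,v \notin N[S]$, all of their neighbors lie in $W = \{u,v,w\}$, the no-$K_2$ hypothesis forces at least one of $u,v$ to be adjacent to $w$, and the two-neighbors-outside-$T$ constraint applied to $w$ then forces $w$ to be adjacent to both. A final split on whether $u \sim v$ gives either an induced $K_3$ on $\{u,v,w\}$ (the $K_3$ bullet, with $w$ as the potentially externally-adjacent vertex) or an induced $P_3$ with $w$ as middle vertex (the $P_3$ bullet). The delicate step is this final case analysis, where the propagation constraint and the no-$K_2$-component hypothesis must be combined carefully to pin down the exact structure on $\{u,v,w\}$.
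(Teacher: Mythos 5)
Your proposal is correct. Parts (1), (2), and the backward direction of (3) match the paper's argument essentially step for step: exhibit $V$ minus the isolated vertex, the $K_2$, or the triple as an FPDS, and use the earlier parts to bound $\fgp(G)$ from above. Where you genuinely diverge is the forward direction of (3). The paper reasons at the level of the domination step: it first shows that at most one of the three vertices of $V\setminus S$ can lie in $N(S)$ (two dominated vertices would force the third in after one propagation step), then classifies $G[\{u,v,w\}]$ directly, using the no-isolated-vertex and no-$K_2$-component hypotheses to exclude the sparse induced subgraphs and a short propagation chase to force $v$ to be the middle vertex of the $P_3$. You instead pass to the terminal monitored set $T=\mathcal{P}^\infty(S)$ and exploit the single stalling condition -- every vertex of $T$ with an unmonitored neighbor has at least two -- to pin down $|V\setminus T|\in\{2,3\}$ and read off the structure in each case. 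Your organization is arguably cleaner: one boundary condition on $T$ replaces the paper's several ad hoc propagation arguments, and it transparently covers the case (only implicit in the paper) in which none of $u,v,w$ is dominated and the triple is an entire $P_3$ or $K_3$ component. What the paper's route buys is economy of machinery, since it never needs to invoke $\mathcal{P}^\infty(S)$ explicitly; but both proofs ultimately rest on the same two facts, namely that the unmonitored vertices have all their neighbors inside the triple, and that a monitored vertex cannot have exactly one unmonitored neighbor. Your "delicate" final case split ($u\sim v$ versus $u\not\sim v$, after forcing $w$ adjacent to both) is exactly right and is where the paper, too, distinguishes the $K_3$ bullet from the $P_3$ bullet.
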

\begin{proof}
If $G$ has an isolated vertex $v$, let $S= V \backslash \{v\}$.  Then $S$ is an FPDS, and $\fgp(G) = n-1$.  Conversely, let $\fgp(G) = n-1$, and let $S$ be an FPDS.  If the single vertex $v \in V \backslash S$ has an edge to any vertex $u \in S$, then $v \in \mathcal{P}^0(S)$.  Hence, $v$ is isolated, completing the proof of \ref{nminus1}.

If $G$ contains no isolated vertices, and one component is $K_2$ with vertices $u,v$, then let $S = V\backslash \{u,v\}$. Then $S$ is an FPDS, and $\fgp(G)=n-2$.  

Conversely, suppose $\fgp(G) = n-2$.  We know $G$ contains no isolated vertices.  Let $S$ be an FPDS with $|S| = n-2$.  Let $u,v$ be the two vertices in $V\backslash S$.  If $u$ is adjacent to some vertex $w \in S$,   then  $u \in \mathcal{P}^0(S)$, giving us that all vertices except possibly $v$ are in $\mathcal{P}^0(S)$.  But then, $v \in \mathcal{P}^1(S)$, implying that $S$ is a PDS.  Therefore, neither $u$ nor $v$ is adjacent to any vertex in $S$, but since there are no isolated vertices, $uv$ forms a copy of $K_2$, completing the proof of \ref{nminus2}.

If $G$ does not contain any isolated vertex or  component that is $K_2$, then $\fgp(G) \leq n-3$.  If $G$ contains an induced copy of $P_3=\{u, v, w\}$ with edges $uv, vw$, note that  only $v$ may be adjacent to other vertices in $V$. Let $S = V \backslash \{u, v, w\}$.  Then it is possible that $v \in \mathcal{P}^0(S)$, but $u, w \notin \mathcal{P}^i(S)$ for any $i \geq 0$ since $N(u)=N(w)=\{v\}$.  The same holds if $G[\{u,v,w\}]$ forms a copy of $K_3$.  

Conversely, suppose $\fgp(G)=n-3$, and let $S$ be an FPDS with $|S|=n-3$.  Let $\{u,v,w\} = V\backslash S$.  Suppose $\{ u, v \} \subseteq N(S)$.    Then $w \notin N(S)$, because $w \in N(S)$ implies $\{u,v,w\} \subseteq \mathcal{P}^0(S)$.  However, since $w$ cannot be an isolated vertex,  $w \in N(u)$ (without loss of generality) but then $w \in \mathcal{P}^1(S)$, implying that $S$ is a PDS.   Hence, only one of $\{u,v,w\}$ may be in $N(S)$.  Without loss of generality, say it is $v$.  Since  $G$ has no isolated vertices or $K_2$ component, and vertices $u$ and $w$ have no neighbors outside of $\{u,v,w\}$, then $G[\{u,v,w\}]$ is either $K_3$ or $P_3$.  If it is $K_3$, we are done.  If it is $P_3$, and $v$ has any other neighbors in $G$, note that $v$ must be the middle vertex.  If not, $\{u,w\} \subseteq \mathcal{P}^2(S)$, implying $S$ is not an FPDS.  This completes the proof of \ref{nminus3}.  
\end{proof}


\subsection{Graphs in which every  vertex is a PDS}
\label{sec:lowvalues}

In this section, we present some results on graphs that have $\fgp(G)=0$.  Note that if $\fgp(G)=0$, then any single vertex is a PDS of $G$.  We use the notation $\mathcal{P}^i_G(S)$ to indicate $\mathcal{P}^i(S)$ in $G$ only when the graph in question is ambiguous.  

\begin{lem}
$\bar{\gamma}_p\left( G_1 \vee G_2 \vee \cdots \vee G_n \right) =0$ if and only if for each $i=1, 2, \ldots, n$, where $n \geq 2$, either $\bar{\gamma}_p(G_i)=0$ or $G_i=\overline{K_2}$. \label{lem:join}
\end{lem}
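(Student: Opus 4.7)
The plan is to reduce to the case of singleton seeds. By monotonicity of $\mathcal{P}^\infty$ (any subset of a failed power-dominating set is itself failed), $\fgp(G) = 0$ is equivalent to saying every singleton $\{v\} \subseteq V(G)$ is a PDS of $G$. So the lemma becomes a question about which singletons in $V(H)$, where $H := G_1 \vee \cdots \vee G_n$, power-dominate $H$.

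The heart of the proof will be a structural claim: for $v \in V(G_i)$, the singleton $\{v\}$ is a PDS of $H$ if and only if $G_i$-propagation from $\{v\}$ leaves at most one vertex of $V(G_i)$ unmonitored. To justify this I will observe that $\mathcal{P}^0_H(\{v\}) = N_{G_i}[v] \cup \bigcup_{j \neq i} V(G_j)$, so every unmonitored vertex at every subsequent step lies in $V(G_i)$. A vertex in $V(G_j)$ with $j \neq i$ is joined to every vertex of $V(G_i)$, so it can trigger a propagation only when exactly one vertex of $V(G_i)$ is still unmonitored; meanwhile, a monitored vertex inside $V(G_i)$ has all of its unmonitored neighbors contained in $V(G_i)$, so its propagation in $H$ proceeds identically to propagation in $G_i$. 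Hence $H$-propagation mirrors $G_i$-propagation step by step, and succeeds precisely when $G_i$-propagation either exhausts $V(G_i)$ or halts with a single leftover vertex (which any outside vertex then finishes).

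Given this structural claim, the backward direction is a two-case check. Take any $v \in V(G_i)$. If $\fgp(G_i) = 0$ then $\{v\}$ is a PDS of $G_i$ and the leftover set is empty; if $G_i = \overline{K_2}$ then $\{v\}$ dominates itself but leaves only the other (isolated) vertex unmonitored, so the leftover has size one. In either case the structural claim makes $\{v\}$ a PDS of $H$, giving $\fgp(H) = 0$.

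For the forward direction I will argue the contrapositive: if some $G_i$ satisfies $\fgp(G_i) > 0$ and $G_i \neq \overline{K_2}$, I will exhibit a singleton in $V(H)$ that fails. By monotonicity there is a single vertex $v \in V(G_i)$ whose $G_i$-propagation leaves a non-empty leftover $W_v$. If $|W_v| \geq 2$, the structural claim immediately makes $\{v\}$ fail in $H$. If $|W_v| = 1$, the lone leftover vertex $u$ must be isolated in $G_i$, since otherwise any neighbor of $u$ in the monitored set would have $\{u\}$ as its unique unmonitored neighbor and propagation would continue. Re-seed from $u$: since $u$ is isolated, $G_i$-propagation from $\{u\}$ stalls at once with leftover of size $|V(G_i)| - 1$, and the hypotheses $G_i \neq \overline{K_2}$ together with the existence of a second vertex $v \neq u$ force $|V(G_i)| \geq 3$ (if $|V(G_i)| = 2$ with one isolated vertex then the other is isolated too, giving $G_i = \overline{K_2}$). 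Hence the leftover from $u$ has size at least two, and the structural claim exhibits $\{u\}$ as a failed singleton of $H$, contradicting $\fgp(H) = 0$. I expect the main obstacle to be a clean verification of the structural claim itself, especially the assertion that vertices outside $V(G_i)$ cannot help $H$-propagation until the unmonitored set inside $V(G_i)$ has shrunk to a single vertex.
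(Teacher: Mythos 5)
Your proof is correct, and its skeleton matches the paper's: reduce to singletons via monotonicity, then observe that propagation in the join, restricted to $V(G_i)$, mirrors propagation in $G_i$, because after the domination step every vertex outside $V(G_i)$ is monitored and is adjacent to \emph{all} remaining unmonitored vertices, hence can only fire when exactly one vertex of $V(G_i)$ is left. Where you genuinely differ is in the key intermediate claim. The paper asserts that $\{v\}$ is a PDS of $G_1 \vee G_2$ if and only if $\{v\}$ is a PDS of $G_1$ or $G_1 = \overline{K_2}$; taken literally this is false. For example, with $G_1 = P_3 \cup K_1$ and $v$ an endpoint of the $P_3$, the singleton $\{v\}$ fails in $G_1$ (it never reaches the isolated vertex) but succeeds in $G_1 \vee G_2$, since once the path is absorbed the isolated vertex is the unique unmonitored neighbor of every vertex of $G_2$. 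Your structural claim --- $\{v\}$ is a PDS of the join iff $G_i$-propagation from $\{v\}$ strands at most one vertex --- is the correct criterion, and your forward direction then has to do extra work that the paper skips: you note that a lone stranded vertex must be isolated in $G_i$, re-seed from it, and use $G_i \neq \overline{K_2}$ to guarantee at least two stranded vertices and hence a failed singleton in the join. So your argument is not merely correct; it repairs an imprecision in the published proof. Two minor remarks: your treatment of the $n$-fold join directly (rather than the paper's induction on pairwise joins) is legitimate because every vertex of $G_j$ is adjacent to every vertex of $G_i$ for $j \neq i$; and the ``mirroring'' assertion deserves to be written as an explicit induction on the step index, showing $\mathcal{P}^k_{H}(\{v\}) \cap V(G_i) = \mathcal{P}^k_{G_i}(\{v\})$ for as long as at least two vertices of $V(G_i)$ remain unmonitored.
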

\begin{proof}
Let $G_1$ and $G_2$ be graphs, and let $v \in V(G_1)$.  Then $\mathcal{P}^0_{G_1 \vee G_2}(\{v\})= \mathcal{P}^0_{G_1}(\{v\}) \cup V(G_2)$, and as a result, $\mathcal{P}^i_{G_1 \vee G_2}(\{v\}) = \mathcal{P}^i_{G_1}(\{v\}) \cup V(G_2)$ for any $i \geq 0$, unless $G_1=\overline{K_2}$, in which case $\mathcal{P}^1_{G_1 \vee G_2}(\{v\}) = V(G_1) \cup V(G_2)$.  Hence, $\{v\}$ is a PDS in $G_1 \vee G_2$ if and only if  $\{v\}$ is a PDS in $G_1$ or $G_1=\overline{K_2}$, and similarly for $G_2$.  That is, $\bar{\gamma}_p\left( G_1 \vee G_2\right) = 0$ if and only if $\bar{\gamma}_p(G_1)=0$ or $G_1 = \overline{K_2}$, and  $\bar{\gamma}_p(G_2)=0$ or $G_2 = \overline{K_2}$.
We can use the same argument if $G_1$ or $G_2$ is itself the join of two graphs.  Hence, by induction, $\bar{\gamma}_p\left( G_1 \vee G_2 \vee \cdots \vee G_n \right) =0$ if and only if $\bar{\gamma}_p(G_i)=0$ or $G_i = \overline{K_2}$ for each $i=1, 2, \ldots, n$.
\end{proof}

In a poster \cite{tostado2017failed}, Tostado listed several families of graphs that have $\fgp=0$.  We include this list here with proofs.  For $n \geq 4$, a \emph{wheel on $n$ vertices}, $W_n$, is defined by $W_n = C_{n-1} \vee \{v\}$.

\begin{thm}
The following graphs have $\fgp=0$ \cite{tostado2017failed}\label{thm:tostado}.
\begin{enumerate}
\item a path on $n$ vertices, $P_n$ for $n \geq 1$, \label{path}
\item a cycle on $n$ vertices, $C_n$ for $n \geq 3$,  \label{cycle}
\item a complete graph on $n$ vertices, $K_n$, for $n \geq 1$, \label{complete}
\item a wheel on $n$ vertices, $W_n$ for $n\geq 4$. \label{wheel}
\end{enumerate}
\end{thm}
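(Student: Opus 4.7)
The plan is to dispatch the four families in an order that lets the later cases lean on the earlier ones, namely $K_n$, then $P_n$, then $C_n$, then $W_n$.

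The complete graph case is immediate: for every $v\in V(K_n)$, $N[v]=V(K_n)$, so $\mathcal{P}^0(\{v\})=V(K_n)$ and $\{v\}$ is a PDS; hence no singleton fails and $\fgp(K_n)=0$. (The same conclusion also follows from Lemma \ref{lem:join} via $K_n=K_1\vee K_{n-1}$ and induction, but the direct argument is shorter.)

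For $P_n$ and $C_n$ the underlying idea is the same. Fix any vertex $v$. After the domination step, the monitored set is a connected ``interval'' containing $v$ and its at most two neighbors. Because every other vertex has degree at most $2$, each boundary vertex of this interval has exactly one unmonitored neighbor, so propagation pushes the interval outward by one vertex on each side. I would write this carefully once for $P_n$: index the vertices as $v_1,\ldots,v_n$, fix $v_i$, and prove by induction on $k\ge 0$ that whenever they exist, $v_{i+k}$ and $v_{i-k}$ lie in $\mathcal{P}^k(\{v_i\})$. The inductive step uses the fact that $v_{i+k-1}$ has the single unmonitored neighbor $v_{i+k}$ (and symmetrically on the other side). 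The argument for $C_n$ is essentially the same, with indices taken modulo $n$; the only difference is that the two expanding fronts meet on the far side instead of terminating at a leaf. Small base cases ($P_1$, $P_2$, $C_3$) should be checked separately.

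The wheel case then follows with no additional work: by definition $W_n=C_{n-1}\vee K_1$, and part (\ref{cycle}) gives $\fgp(C_{n-1})=0$ for $n\ge 4$, while $\fgp(K_1)=0$ trivially. Lemma \ref{lem:join} then yields $\fgp(W_n)=0$. The only nontrivial bookkeeping in the whole proof is the propagation induction for $P_n$ and $C_n$; there is no real obstacle, but one has to be slightly careful about the base case and about what happens when a front reaches a leaf in the path or collides with itself on the cycle.
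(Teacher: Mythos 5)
Your proof is correct and follows essentially the same route as the paper: paths and cycles are handled by observing that the monitored set after the domination step is a connected segment that propagates outward (the paper states this more tersely), and wheels are dispatched via Lemma \ref{lem:join} applied to $W_n = C_{n-1}\vee K_1$ exactly as in the paper. The only cosmetic difference is that you prove the $K_n$ case directly from $N[v]=V$ rather than via Lemma \ref{lem:join} with $K_n = K_1\vee\cdots\vee K_1$; both are immediate.
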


\begin{proof}
If $G=P_n$ or $G=C_n$, and $S=\{ v\}$ for any vertex $v \in V(G)$, then $\mathcal{P}^0(S)$ consists of at least two adjacent vertices, and $S$ is a PDS, proving \ref{path} and \ref{cycle}.  For \ref{complete}, note that $K_n = G_1 \vee G_2 \vee \cdots \vee G_n$ where $G_i$ consists of a single vertex, and thus by Lemma \ref{lem:join}, $\fgp(K_n)=0$.  Finally, since $W_n = C_{n-1} \vee \{v\}$, also by Lemma \ref{lem:join}, $\fgp(W_n)=0$.  Thus  \ref{wheel} also holds.
\end{proof}

We add several families of graphs to this list.  An example for item \ref{fanchordsplus1}   from Theorem  \ref{thm:listwith0} below is shown in Figure \ref{fig:fanchords}.  In the proof of Theorem \ref{thm:listwith0}, we use a property that follows from the definitions of PDS and zero forcing sets:

\begin{lem}
Suppose $G$ is a graph, and $S \subseteq V$.  Suppose that for some $i \geq 0$, a subset $S'$ of 
$\mathcal{P}^i(S)$ is a zero forcing set of the graph induced by $(V \backslash \mathcal{P}^i(S) ) \cup S'$.  Then $S$ is a PDS of $G$.   \label{lem:zfs}
\end{lem}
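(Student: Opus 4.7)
My plan is to show that running zero forcing in $H := G[(V \setminus \mathcal{P}^i(S)) \cup S']$ starting from $S'$ mirrors, step by step, a continuation of the power-domination process in $G$ starting from $\mathcal{P}^i(S)$. Denoting the zero-forcing stages in $H$ by $\mathcal{Q}_H^k(S')$, the engine of the proof is the containment
\begin{equation*}
\mathcal{Q}_H^k(S') \cup \mathcal{P}^i(S) \subseteq \mathcal{P}^{i+k}(S) \qquad \text{for every } k \geq 0,
\end{equation*}
which I would prove by induction on $k$.

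The base case $k = 0$ is immediate since $S' \subseteq \mathcal{P}^i(S)$. For the inductive step, suppose $w \in \mathcal{Q}_H^{k+1}(S') \setminus \mathcal{Q}_H^k(S')$ is forced in $H$ by some $v \in \mathcal{Q}_H^k(S')$ with $N_H[v] \setminus \mathcal{Q}_H^k(S') = \{w\}$. Then $v \in \mathcal{P}^{i+k}(S)$ by induction, and I claim $N_G[v] \setminus \mathcal{P}^{i+k}(S) \subseteq \{w\}$. Given $u \in N_G[v]$, either $u \in \mathcal{P}^i(S) \subseteq \mathcal{P}^{i+k}(S)$, or $u \in V \setminus \mathcal{P}^i(S) \subseteq V(H)$, in which case $u \in N_H[v]$ (since $H$ is induced and both $u, v \in V(H)$), giving $u \in \mathcal{Q}_H^k(S') \cup \{w\} \subseteq \mathcal{P}^{i+k}(S) \cup \{w\}$. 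The power-domination rule applied to $v$ then adds $w$ to $\mathcal{P}^{i+k+1}(S)$, unless $w$ was already there; either way $w \in \mathcal{P}^{i+k+1}(S)$, completing the induction.

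Since $S'$ is a zero forcing set of $H$, there is some $k$ with $\mathcal{Q}_H^k(S') = V(H)$. Then $V = V(H) \cup \mathcal{P}^i(S) \subseteq \mathcal{P}^{i+k}(S) \subseteq \mathcal{P}^\infty(S)$, so $S$ is a PDS of $G$. The only subtlety is the lift from $H$ to $G$: vertices of $\mathcal{P}^i(S) \setminus S'$ have been deleted in forming $H$, so some $G$-neighbors of $v$ are not $H$-neighbors, but those missing neighbors already lie in $\mathcal{P}^i(S) \subseteq \mathcal{P}^{i+k}(S)$ and therefore cannot obstruct the power-domination step that lifts the zero-forcing move.
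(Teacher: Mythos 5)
Your proof is correct and follows the same route as the paper, whose entire argument is the one-line containment $\mathcal{Q}^{j}(S') \subseteq \mathcal{P}^{i+j}(S)$; you have simply supplied the induction and the key verification (that $G$-neighbors of $v$ lost in passing to the induced subgraph already lie in $\mathcal{P}^i(S)$ and so cannot block the propagation step) that the paper leaves implicit.
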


\begin{proof}
For each $j \geq 0$, $\mathcal{Q}^j(S') \subseteq  \mathcal{P}^{i+j}(S)$.  Thus, if $\mathcal{Q}^{\infty}(S') = V$, then $\mathcal{P}^{\infty}(S') = V$ as well.  
\end{proof}

\begin{figure}[h!]
\begin{center}
\begin{subfigure}{0.19\linewidth}
\begin{center}
\begin{tikzpicture}[auto, scale=1]
\tikzstyle{vertex}=[draw, circle, inner sep=0.6mm]

\node (v3) at (108:.8)[vertex, fill=blue, label= $v_3$]{};

\foreach[evaluate=\y using 36*\x)] \x in {2}
\node (v\x) at (\y:.8)[vertex, label=$v_{\x}$]{};

\foreach[evaluate=\y using 36*\x)] \x in {4, 5, 6}
\node (v\x) at (\y:.8)[vertex, label=left: $v_{\x}$]{};

\foreach[evaluate=\y using 36*\x)] \x in {7, 8}
\node (v\x) at (\y:.8)[vertex, label=below: $v_{\x}$]{};

\foreach[evaluate=\y using 36*\x)] \x in {1, 9, 10}
\node (v\x) at (\y:.8)[vertex, label=right: $v_{\x}$]{};

\foreach[evaluate=\y using int(\x+1))] \x in {1, ..., 9}
\draw (v\x) -- (v\y);

\draw(v10)--(v1);

\foreach \x in {6,7,8}
\draw (v\x) to (v1);
\draw (v2) to (v5);

\end{tikzpicture}
\end{center}
\caption{$S$}
\end{subfigure}
\begin{subfigure}{0.19\linewidth}
\begin{center}
\begin{tikzpicture}[auto, scale=1]
\tikzstyle{vertex}=[draw, circle, inner sep=0.6mm]

\node (v3) at (108:.8)[vertex, fill=blue, label= $v_3$]{};

\foreach[evaluate=\y using 36*\x)] \x in {2}
\node (v\x) at (\y:.8)[vertex, fill=blue, label=$v_{\x}$]{};

\foreach[evaluate=\y using 36*\x)] \x in {4}
\node (v\x) at (\y:.8)[vertex, fill=blue, label=left: $v_{\x}$]{};

\foreach[evaluate=\y using 36*\x)] \x in {5, 6}
\node (v\x) at (\y:.8)[vertex, label=left: $v_{\x}$]{};

\foreach[evaluate=\y using 36*\x)] \x in {7, 8}
\node (v\x) at (\y:.8)[vertex, label=below: $v_{\x}$]{};

\foreach[evaluate=\y using 36*\x)] \x in {1, 9, 10}
\node (v\x) at (\y:.8)[vertex, label=right: $v_{\x}$]{};

\foreach[evaluate=\y using int(\x+1))] \x in {1, ..., 9}
\draw (v\x) -- (v\y);

\draw(v10)--(v1);

\foreach \x in {6,7,8}
\draw (v\x) to (v1);
\draw (v2) to (v5);
\end{tikzpicture}
\end{center}
\caption{$\mathcal{P}^0(S)$}
\end{subfigure}
\begin{subfigure}{0.19\linewidth}
\begin{center}
\begin{tikzpicture}[auto, scale=1]
\tikzstyle{vertex}=[draw, circle, inner sep=0.6mm]

\node (v3) at (108:.8)[vertex, fill=blue, label= $v_3$]{};

\foreach[evaluate=\y using 36*\x)] \x in {2}
\node (v\x) at (\y:.8)[vertex, fill=blue, label=$v_{\x}$]{};

\foreach[evaluate=\y using 36*\x)] \x in {4,5}
\node (v\x) at (\y:.8)[vertex, fill=blue, label=left: $v_{\x}$]{};

\foreach[evaluate=\y using 36*\x)] \x in {6}
\node (v\x) at (\y:.8)[vertex, label=left: $v_{\x}$]{};

\foreach[evaluate=\y using 36*\x)] \x in {7, 8}
\node (v\x) at (\y:.8)[vertex, label=below: $v_{\x}$]{};

\foreach[evaluate=\y using 36*\x)] \x in {1, 9, 10}
\node (v\x) at (\y:.8)[vertex, label=right: $v_{\x}$]{};

\foreach[evaluate=\y using int(\x+1))] \x in {1, ..., 9}\draw (v\x) -- (v\y);

\draw(v10)--(v1);

\foreach \x in {6,7,8}
\draw (v\x) to (v1);
\draw (v2) to (v5);
\end{tikzpicture}
\end{center}
\caption{$\mathcal{P}^1(S)$}
\end{subfigure}
\begin{subfigure}{0.19\linewidth}
\begin{center}
\begin{tikzpicture}[auto, scale=1]
\tikzstyle{vertex}=[draw, circle, inner sep=0.6mm]

\node (v3) at (108:.8)[vertex, fill=blue, label= $v_3$]{};

\foreach[evaluate=\y using 36*\x)] \x in {2}
\node (v\x) at (\y:.8)[vertex, fill=blue, label=$v_{\x}$]{};

\foreach[evaluate=\y using 36*\x)] \x in {4,5, 6}
\node (v\x) at (\y:.8)[vertex, fill=blue, label=left: $v_{\x}$]{};

\foreach[evaluate=\y using 36*\x)] \x in {7, 8}
\node (v\x) at (\y:.8)[vertex, label=below: $v_{\x}$]{};

\foreach[evaluate=\y using 36*\x)] \x in {9, 10}
\node (v\x) at (\y:.8)[vertex, label=right: $v_{\x}$]{};

\foreach[evaluate=\y using 36*\x)] \x in {1}
\node (v\x) at (\y:.8)[vertex, fill=blue, label=right: $v_{\x}$]{};

\foreach[evaluate=\y using int(\x+1))] \x in {1, ..., 9}
\draw (v\x) -- (v\y);

\draw(v10)--(v1);

\foreach \x in {6,7,8}
\draw (v\x) to (v1);
\draw (v2) to (v5);
\end{tikzpicture}\end{center}
\caption{$\mathcal{P}^2(S)$}
\end{subfigure}
\begin{subfigure}{0.19\linewidth}
\begin{center}
\begin{tikzpicture}[auto, scale=1]
\tikzstyle{vertex}=[draw, circle, inner sep=0.6mm]
\node (v3) at (108:.8)[vertex, fill=blue, label= $v_3$]{};

\foreach[evaluate=\y using 36*\x)] \x in {2}
\node (v\x) at (\y:.8)[vertex, fill=blue, label=$v_{\x}$]{};

\foreach[evaluate=\y using 36*\x)] \x in {4,5, 6}
\node (v\x) at (\y:.8)[vertex, fill=blue, label=left: $v_{\x}$]{};

\foreach[evaluate=\y using 36*\x)] \x in {7}
\node (v\x) at (\y:.8)[vertex, fill=blue,  label=below: $v_{\x}$]{};
\foreach[evaluate=\y using 36*\x)] \x in {8}
\node (v\x) at (\y:.8)[vertex, label=below: $v_{\x}$]{};

\foreach[evaluate=\y using 36*\x)] \x in {9, 10}
\node (v\x) at (\y:.8)[vertex, label=right: $v_{\x}$]{};

\foreach[evaluate=\y using 36*\x)] \x in {1}
\node (v\x) at (\y:.8)[vertex, fill=blue, label=right: $v_{\x}$]{};
\foreach[evaluate=\y using int(\x+1))] \x in {1, ..., 9}
\draw (v\x) -- (v\y);

\draw(v10)--(v1);

\foreach \x in {6,7,8}
\draw (v\x) to (v1);
\draw (v2) to (v5);
\end{tikzpicture}\end{center}
\caption{$\mathcal{P}^3(S)$}
\end{subfigure}

\end{center}
\caption{A graph $G$ with $\fgp(G)=0$ as in Theorem \ref{thm:listwith0}, item \ref{fanchordsplus1}.  $S=\{ v_3\}$ is shown in blue on the left, followed by $\mathcal{P}^0(S)$ through $\mathcal{P}^3(S)$.  Continuing, $\mathcal{P}^4(S) = \{v_1, v_2, \ldots v_8\}$, and $\mathcal{P}^5(S)=V$.}
\label{fig:fanchords}
\end{figure}
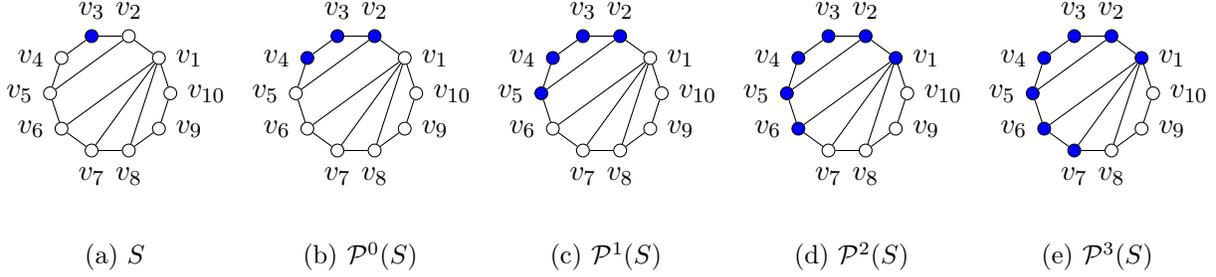

\begin{thm} 
If $G$ is any of the following graphs, then $\fgp(G)=0$.
\begin{enumerate}
\item $\overline{C_n}$ for $n \geq 5$, \label{cyclecomplement}
\item $\overline{P_n}$ for $n \geq 4$, \label{pathcomplement}
\item $C_n = v_1 v_2 \ldots v_n v_1$ with $k$ chords: $\{v_1, v_i\}, \{v_1, v_{i+1}\}, \ldots,\{v_1, v_{i+k-1}\}$, where $i \geq 3$, $n \geq 4$, and $i+k   \leq n-1$.  \label{fanchords}
\item $C_n = v_1 v_2 \ldots v_n v_1$ with $k+1$ chords: $\{v_1, v_i\}, \{v_1, v_{i+1}\}, \ldots, \{v_1, v_{i+k-1}\}$, and $\{ v_2, v_{i-1}\}$ where $i \geq 5$, $n \geq 6$, and $i+k   \leq n-1$.  
   \label{fanchordsplus1}
\item $G \vee H$ where $\fgp(G)=0$ and either $\fgp(H)=0$ or $H= \overline{K_2}$.   \label{fromlemma}
\end{enumerate}
\label{thm:listwith0}
\end{thm}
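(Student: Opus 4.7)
The plan is to show that for each of these graphs $G$, every singleton $\{v\} \subseteq V(G)$ is a PDS; since any superset of a PDS is itself a PDS, this gives $\fgp(G)=0$. Item~\ref{fromlemma} is immediate from Lemma~\ref{lem:join} (applied to $G_1 = G$, $G_2 = H$).

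For items~\ref{cyclecomplement} and~\ref{pathcomplement}, I pick an arbitrary $v_i$ and compute $\mathcal{P}^0(\{v_i\})$. In $\overline{C_n}$ this misses only the original cycle-neighbors $v_{i-1}$ and $v_{i+1}$. Since $v_{i+2} \in \mathcal{P}^0(\{v_i\})$ is non-adjacent in $\overline{C_n}$ only to $v_{i+1}$ and $v_{i+3}$, we have $N_{\overline{C_n}}[v_{i+2}] \setminus \mathcal{P}^0(\{v_i\}) = \{v_{i-1}\}$, so $v_{i-1}$ is forced into $\mathcal{P}^1(\{v_i\})$; symmetrically $v_{i-2}$ forces $v_{i+1}$. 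The hypothesis $n \geq 5$ ensures that $v_{i+2}$ and $v_{i-2}$ are distinct from $v_{i-1}$, $v_{i+1}$ and from each other. The $\overline{P_n}$ case is analogous, with endpoint cases handled separately by noting that an endpoint of $P_n$ has only one non-neighbor in $\overline{P_n}$, so $\mathcal{P}^0(\{v\})$ already misses at most one vertex.

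For items~\ref{fanchords} and~\ref{fanchordsplus1}, I split on the starting vertex $v_j$. When $j = 1$, $\mathcal{P}^0(\{v_1\})$ already contains $v_1$, $v_2$, $v_n$, and all chord endpoints; the missing vertices form at most two disjoint arcs of the cycle, and each arc has a monitored degree-$2$ endpoint (such as $v_2$ or $v_n$) from which propagation zips inward one vertex at a time. When $j \neq 1$, propagation first walks outward along both cycle arcs from $v_j$ — each intermediate degree-$2$ cycle vertex has exactly one unmonitored cycle-neighbor — until the front reaches a cycle-neighbor of $v_1$; once $v_2$ or $v_n$ is monitored and enough chord endpoints have been monitored, $v_1$ itself is forced. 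For item~\ref{fanchordsplus1}, the extra chord $\{v_2, v_{i-1}\}$ is exploited at starting vertices that would otherwise stall at $v_2$, as in Figure~\ref{fig:fanchords}: once $v_{i-1}$ is monitored, $v_2$ has only $v_1$ unmonitored among its three neighbors and thus forces it.

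The main obstacle is the $j \neq 1$ case in items~\ref{fanchords} and~\ref{fanchordsplus1}: each chord endpoint $v_{i+t}$ has three neighbors $v_{i+t-1}$, $v_{i+t+1}$, $v_1$, so propagation can stall at such a vertex until enough information arrives from the other side. A careful tracking of the two propagation fronts, using the bounds $i \geq 3$, $i+k \leq n-1$ (and $i \geq 5$, $n \geq 6$ for item~\ref{fanchordsplus1}), is required to show that at least one arc reaches $v_2$ or $v_n$ before stalling irrecoverably, after which $v_1$ is forced and Lemma~\ref{lem:zfs} finishes off any remaining chord endpoints via a zero-forcing argument on the star-like induced subgraph at $v_1$.
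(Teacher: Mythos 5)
Your proposal is correct and follows essentially the same route as the paper: reduce to singletons via monotonicity, dispatch item \ref{fromlemma} with Lemma \ref{lem:join}, compute $\mathcal{P}^0$ and $\mathcal{P}^1$ directly for the complements, and for items \ref{fanchords}--\ref{fanchordsplus1} propagate along the degree-two arcs until $v_1$ together with two adjacent cycle vertices is monitored, then finish with Lemma \ref{lem:zfs}. The paper packages that final step as the observation that $\{v_1, v_j, v_{j+1}\}$ is a zero forcing set for $2 \leq j \leq n-1$, which is exactly the ``star-like'' zero-forcing finish you describe, and its case analysis on the starting vertex ($v_1$, a chord endpoint, the vertex $v_{i-1}$ in item \ref{fanchordsplus1}, or an internal degree-two vertex) is the ``careful tracking'' your sketch defers.
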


\begin{proof}

To prove \ref{cyclecomplement}, let $G=\overline{C_n}$ with $n \geq 5$, and $S=\{v\}$ for any vertex $v \in V(G)$.  Let the two neighbors of $v$ in $C_n = \overline{G}$ be  $u$ and $w$.    In $G=\overline{C_n}$, then $\mathcal{P}^0(S) = V(G) \backslash \{u, w\}$, and it follows easily that $\mathcal{P}^1(S)=V(G)$.  

To prove \ref{pathcomplement}, note that if $G=\overline{P_n}$ with $n \geq 4$, and $S = \{ v\}$ for any vertex $v$ with $\deg(v)=2$ in $P_n$, then we can use the same argument as for $\overline{C_n}$.  Otherwise, if $\deg(v)=1$ in $P_n$, then $\mathcal{P}^0(S)=V(G) \backslash \{u\}$ where $u$ is the unique neighbor of $v$ in $P_n$.  Since $n\geq 4$, we see that $\mathcal{P}^1(S)=V$, and $S=\{v\}$ is a PDS.  

To prove \ref{fanchords}, let $P_1$ and $P_2$ be the unique paths from $v_1$ to $v_i$ and from $v_1$ to $v_{i+k-1}$, respectively, whose internal vertices all have degree 2. Note that $\{v_1, v_{j}, v_{j+1}\}$ is a zero forcing set of $G$ for $2 \leq j \leq n-1$. 
If $S= \{v_1\}$, then $\mathcal{P}^0(S)$ consists of $v_1$, $v_2$, $v_n$, and $v_{i}$ through $v_{i+k-1}$, which is a zero forcing set of $G$.  Hence, $S=\{v_1\}$ is a PDS.  Similarly, if $S=\{v_j\}$ where $i \leq j \leq i+k -1$ , then $\mathcal{P}^0(S)$ includes $v_1$, $v_{j-1}$, $v_j$, and $v_{j+1}$, a zero forcing set of $G$.  Hence $S=\{v_j\}$ is a PDS.  
  Finally, suppose $S=\{u\}$ for an internal vertex of $P_1$ or $P_2$.  There exists $j$ such that $v \in \mathcal{P}^j(S)$, and both neighbors of $u$ on the cycle are also in $\mathcal{P}^j(S)$, so $\mathcal{P}^j(S)$ is a zero forcing set, and it follows that $\{u\}$ is a PDS.     

To prove \ref{fanchordsplus1}, note that $\{v_1, v_{j}, v_{j+1}\}$ is a zero forcing set of $G$ for $2 \leq j \leq n-1$.  Let $P_1$ denote the path with all internal vertices of degree 2 from $v_2$ to $v_{i-1}$, and $P_2$ the similar path from $v_{i+k-1}$ to $v_1$.  If $S=\{v_1\}$, then $\mathcal{P}^0(S)$ contains $v_1$ and $\{v_j, i \leq j \leq i+k-1\}$, which is a zero forcing set of $G$; hence $S=\{v_1\}$ is a PDS.  Similarly, if $S=\{v_{\ell}\}$ for $ \ell \in \{ 2, i, i+1, \ldots, i+k-1, n\}$, then $\mathcal{P}^0(S)$ contains $\{v_1, v_j, v_{j+1}\}$ for some $j$, a zero forcing set.  
Thus, $S=\{v_{\ell}\}$ for $\ell \in \{ 2, i, i+1, \ldots, i+k-1, n\}$ is a PDS.  Suppose $S=\{v_{i-1}\}$.  Then $\mathcal{P}^0(S)=\{v_{i-2}, v_{i-1}, v_i, v_2\}$.  Since $v_{i-2}$ has a unique neighbor $v_{i-3}$ outside of $P^0(S)$, the next 
vertex along $P_1$, $\mathcal{P}^1(S)=\{v_{i-3}, v_{i-2}, v_{i-1}, v_i, v_2\}$.  This continues for all internal vertices 
of $P_1$, giving us that for some $\ell$, $\mathcal{P}^{\ell}(S)=V(P_1) \cup \{v_{i}\}$.  Since $v_1$ is the only neighbor of $v_2$ outside of $\mathcal{P}^{\ell}(S)$, $v_1 \in \mathcal{P}^{\ell+1}(S)$, so $\mathcal{P}^{\ell+1}(S)$ includes at least two adjacent vertices in $G$ as well as $v_1$, which is a zero forcing set.  Hence, $S = \{v_{i-1}\}$ is a PDS.  If $S=\{u\}$ where $u \in V(P_1)$ or $u \in V(P_2)$, there exists $\ell$ such that $\mathcal{P}^{\ell}(S)$ contains all vertices in $P_1$ and the vertex $v_1$, or all vertices in $P_2$ (which includes $v_1$).  This is a zero forcing set; hence $S=\{u\}$ is a PDS for any $u \in V(P_1)$ or $V(P_2)$.  For an example, see Figure \ref{fig:fanchords}.  

The proof of \ref{fromlemma} follows immediately from Lemma \ref{lem:join}.
\end{proof}

Note that if $G$ is not connected, then the vertices of any single component form an FPDS of $G$, giving us the following observation.
\begin{obs}
If $\fgp(G)=0$, then $G$ is connected.
\end{obs}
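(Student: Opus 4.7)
The plan is to prove the contrapositive: if $G$ is disconnected, then $\fgp(G) \geq 1$. Suppose $G$ has at least two connected components, and let $C$ be one of them, so that $V(C) \subsetneq V$ and $V(C) \neq \emptyset$. I would take the candidate failed PDS to be $S = V(C)$.

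The key observation is that monitoring cannot cross component boundaries. First, $\mathcal{P}^0(S) = N[S] = V(C)$, because every neighbor of a vertex in $C$ lies in $C$. Next, I would argue by induction on $i$ that $\mathcal{P}^i(S) \subseteq V(C)$ for all $i \geq 0$: if the containment holds at step $i$, then for any $v \in \mathcal{P}^i(S) \subseteq V(C)$ we have $N[v] \subseteq V(C) \subseteq \mathcal{P}^i(S)$, so the set $N[v] \setminus \mathcal{P}^i(S)$ is empty and propagation from $v$ contributes nothing new. Hence $\mathcal{P}^\infty(S) = V(C) \neq V$, so $S$ is not a PDS.

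Therefore $S$ is an FPDS with $|S| = |V(C)| \geq 1$, which forces $\fgp(G) \geq 1 > 0$. Contrapositively, $\fgp(G) = 0$ implies $G$ is connected. There is no serious obstacle here; the only subtle point is verifying that propagation truly remains confined to $C$, which is immediate from the definition of $\mathcal{P}^{i+1}$ once one notes that $N[v] \subseteq V(C)$ for each $v \in V(C)$.
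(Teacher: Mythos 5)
Your proof is correct and matches the paper's argument exactly: the paper likewise observes that if $G$ is disconnected, the vertex set of any single component is an FPDS, since monitoring cannot cross component boundaries. Your write-up just makes the induction on $\mathcal{P}^i(S)$ explicit.
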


The \emph{path cover number} of a graph $G$, denoted $\mbox{P}(G)$, is the minimum number of vertex disjoint paths, each of which is an induced subgraph of $G$, that contain all vertices of $G$.  Hogben \cite[Theorem 2.13]{hogben2010minimum} showed that $\mbox{P}(G) \leq \Z(G)$, which easily leads to the next theorem.  

\begin{thm}
Suppose $\fgp(G)=0$ and either $G$ has a vertex of degree one or a cut-vertex.  Then $G=P_n$ for some $n \geq 1$. 
\end{thm}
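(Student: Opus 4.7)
The plan is to apply Observation~\ref{obs:domset} in both scenarios to obtain a zero forcing set of a smaller graph, then combine it with Hogben's inequality $\Pn(G)\le\Z(G)$ to force that smaller graph to be a path (using the easy fact that $\Pn(H)=1$ iff $H$ is an induced path on all its vertices).

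\emph{Degree-one case.} Suppose $v$ has degree one with unique neighbor $u$. Since $\fgp(G)=0$, the set $\{v\}$ is a PDS, so by Observation~\ref{obs:domset}, $\{u\}=\mathcal{P}^0(\{v\})\setminus\{v\}$ is a zero forcing set of $G-v$. Hence $\Z(G-v)\le 1$, so $\Pn(G-v)\le 1$ and $G-v$ is a path. Moreover, the only singleton zero forcing set of a path is an endpoint, so $u$ must be an endpoint of $G-v$, and attaching the pendant $v$ at $u$ gives $G=P_n$.

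\emph{Cut-vertex case.} Let $v$ be a cut-vertex and let $G_1,\dots,G_k$ ($k\ge 2$) be the components of $G-v$. I first show $k=2$ and $\deg(v)=2$. Pick any $u\in V(G_1)$; since every edge leaving $V(G_1)$ must pass through $v$, we have $\mathcal{P}^i(\{u\})\subseteq V(G_1)\cup\{v\}$ until $v$ itself propagates outward. For $v$ to propagate we need $|N[v]\setminus\mathcal{P}^i(\{u\})|=1$, but $v$ has at least one neighbor in every $G_j$, so this is possible only if $k-1=1$ (hence $k=2$) and $|N(v)\cap V(G_2)|=1$. A symmetric argument from $u'\in V(G_2)$ gives $|N(v)\cap V(G_1)|=1$, so $N(v)=\{v_1,v_2\}$ with $v_i\in V(G_i)$.

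Now apply Observation~\ref{obs:domset} at $\{v\}$ itself: the set $\{v_1,v_2\}$ is a zero forcing set of $G-v=G_1\cup G_2$. Because zero forcing propagates independently on each connected component, $\{v_i\}$ is a zero forcing set of $G_i$, so $\Z(G_i)\le 1$; Hogben's inequality then yields that $G_i$ is a path, and the singleton ZFS condition forces $v_i$ to be one of its endpoints. Stitching the two paths together through the edges $vv_1$ and $vv_2$ produces $G=P_n$.

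The main obstacle is the propagation analysis in the cut-vertex case: one must carefully verify that starting from any $u$ inside one component of $G-v$, no vertex outside that component can become monitored before $v$ itself propagates, and that this single propagation step must absorb $v$'s unique outside neighbor. Once that geometric fact pins down $k=2$ and $\deg(v)=2$, Observation~\ref{obs:domset} combined with $\Pn(G)\le\Z(G)$ wraps up the argument.
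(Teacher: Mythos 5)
Your proof is correct and follows essentially the same route as the paper's: in both cases you invoke Observation~\ref{obs:domset} together with Hogben's bound $\Pn(G)\le\Z(G)$ to conclude the remaining pieces are paths, and in the cut-vertex case you establish $\deg(v)=2$ and the two-component structure exactly as the paper does, by tracking how propagation from a single vertex inside one component must pass through $v$. The only (welcome) addition is your explicit note that a singleton zero forcing set of a path must be an endpoint, a detail the paper leaves implicit.
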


\begin{proof}
Suppose $G$ has a vertex $v$ of degree one, and $\fgp(G)=0$. Then $\mathcal{P}^0(\{v\})=\{u, v\}$ where $u$ is the unique neighbor of $v$.  By Observation \ref{obs:domset}, $\{u\}$ is a zero forcing set of $G[V\backslash \{v \}]$.  Thus $\Z(G[V \backslash \{v \}])=1$, and by
 \cite[Theorem 2.13]{hogben2010minimum}, since $\mbox{P}(G) \leq \Z(G)$, 
$\mbox{P}(G[V \backslash \{v \}])=1$.  Hence, $G[V \backslash \{v \}]$ is a path, and consequently, $G$ is as well.  

Suppose $G$ has a cut-vertex $v$.  Let $u \in V$ with $u \neq v$, and let $\mathcal{K}_u$ be the component of $G[V \backslash \{v \}]$ containing $u$.  Let vertex $w$ be in a different component, $\mathcal{K}_w$ of $G[V \backslash \{v \}]$.  By assumption, both $\{u \}$ and $\{ w\}$ are PDS.  
Then there exists some $j$ such that $v \in \mathcal{P}^j(\{u\})$, and all other vertices in   $\mathcal{P}^j(\{u\})$ are in the component $\mathcal{K}_u$.  Then $|N(v) \backslash V(\mathcal{K}_w)| =1$, because we assumed that $|N(v) \backslash V(\mathcal{K}_w)| \geq 1$, and if $|N(v) \backslash V(\mathcal{K}_w)|  \geq 2$, then $\{u\}$ is an FPDS.  Since we can make the same argument using $w$ instead of $u$, we know that $v$ has exactly two neighbors: $u' \in V(\mathcal{K}_u)$ and $w' \in V(\mathcal{K}_w)$.    The set $S'=\{v\}$ is a PDS by assumption.  Then $\mathcal{P}^0(S') = \{u', v, w'\}$.  $G[V\backslash \{v\}]$ consists of two components, $\mathcal{K}_u$ and $\mathcal{K}_w$, so $\{u'\}$ is a zero forcing set of $\mathcal{K}_u$ and $\{w'\}$ is a zero forcing set of $\mathcal{K}_w$.  Thus, $\mathcal{K}_u$ is a path with end vertex $u'$, and $\mathcal{K}_w$ is a path with end vertex $w'$.  It follows that $G=P_n$.\end{proof}

\begin{cor}
If $\fgp(G)=0$ and $G$ is not a path, then $\kappa(G) \geq 2$.  In particular, $G$ is not a tree.  
\end{cor}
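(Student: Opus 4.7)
The plan is to derive both assertions as contrapositives of the immediately preceding theorem, using also the observation that $\fgp(G)=0$ forces $G$ to be connected. So the whole argument is essentially a bookkeeping exercise rather than a fresh proof.

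First I would argue the connectivity statement. Suppose $\fgp(G)=0$ and $\kappa(G) \leq 1$. By the observation just above the corollary, $G$ must be connected, so $\kappa(G) \neq 0$, which means $\kappa(G)=1$. But $\kappa(G)=1$ means $G$ has a cut-vertex, and by the preceding theorem this forces $G=P_n$, contradicting the hypothesis that $G$ is not a path. Hence $\kappa(G) \geq 2$.

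For the ``in particular'' clause, I would show $G$ cannot be a tree. Suppose for contradiction that $G$ is a tree. Since $G$ is not a path, $G$ has at least three vertices and must contain at least one leaf (every tree on $\geq 2$ vertices does). But then $G$ has a vertex of degree one, and the preceding theorem again forces $G=P_n$, contradicting the hypothesis. Alternatively, one could invoke the first part: any tree on $\geq 2$ vertices has $\kappa \leq 1$, so the first part already rules it out.

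The argument involves no obstacle of substance; the only thing to watch is that the hypothesis ``$G$ is not a path'' must be used to rule out the conclusion $G=P_n$ coming from the theorem in both sub-arguments, and that connectedness is handled separately via the earlier observation rather than swept under ``$\kappa(G)\geq 1$.''
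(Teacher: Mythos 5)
Your argument is correct and is exactly the intended derivation: the paper states this as an immediate corollary of the preceding theorem together with the connectedness observation, and your contrapositive bookkeeping (disconnected is ruled out by the observation, a cut-vertex or a leaf forces $G=P_n$ by the theorem) is the same route. The only point worth noting is that a tree that is not a path automatically has at least two vertices and hence a leaf, which you handle correctly.
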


\section{Values of $\fgp(G)$ for special graphs}
\label{sec:specificgraphs}
In this section, we determine the value of $\fgp(G)$ for some specific graph families.

\begin{thm}
 The failed power domination number of the complete bipartite graph $K_{m,n}$ with $m \geq n \geq 1$ is given by
 $$\fgp(K_{m,n})=  
 \begin{cases} 
     m-2 & \quad  \mbox{ if } m \geq 2 \\
      0 & \quad \mbox{ otherwise.} \\
   \end{cases}$$

 \end{thm}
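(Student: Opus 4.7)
Write $A$ and $B$ for the two color classes of $K_{m,n}$ with $|A|=m$ and $|B|=n$, so $m\ge n\ge 1$. The trivial case $m=1$ forces $n=1$, giving $K_{1,1}=P_2$, for which $\fgp=0$ by Theorem~\ref{thm:tostado}(\ref{path}) (or directly, since the single vertex set dominates the whole graph). So for the remainder I would assume $m\ge 2$, and prove the two inequalities $\fgp(K_{m,n})\ge m-2$ and $\fgp(K_{m,n})\le m-2$.

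For the lower bound, I would exhibit an explicit FPDS of cardinality $m-2$. Let $S$ be any subset of $A$ with $|S|=m-2$, and let $a_1,a_2$ be the two vertices of $A\setminus S$. If $m=2$ then $S=\emptyset$ and $\mathcal{P}^0(S)=\emptyset\ne V$, so $S$ is a (vacuously) failed PDS. If $m\ge 3$, then $S$ is nonempty and $\mathcal{P}^0(S)=S\cup B$ because every vertex of $S$ has its entire neighborhood $B$ monitored while $a_1,a_2$ have no neighbor in $S$. For propagation, no $a\in S$ has an unmonitored neighbor at all, and every $b\in B$ has two unmonitored neighbors, namely $a_1$ and $a_2$. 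Hence $\mathcal{P}^\infty(S)=S\cup B\ne V$, and $S$ is an FPDS.

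For the upper bound, I would take any $S\subseteq V$ with $|S|\ge m-1$ and show $S$ is a PDS, splitting into three subcases according to how $S$ meets the two parts. If $S\cap A\ne\emptyset$ and $S\cap B\ne\emptyset$, then already $N[S]=V$, so $\mathcal{P}^0(S)=V$ and $S$ is a PDS. If $S\subseteq A$, then $|A\setminus S|\le 1$; either $S=A$ (so $\mathcal{P}^0(S)=V$), or $A\setminus S=\{a\}$, in which case $\mathcal{P}^0(S)=S\cup B$ and any $b\in B$ has $N[b]\setminus\mathcal{P}^0(S)=\{a\}$, so $a\in\mathcal{P}^1(S)$. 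The case $S\subseteq B$ only arises when $|B|\ge m-1$, i.e.\ $n\in\{m-1,m\}$, and is handled by the symmetric argument with the roles of $A$ and $B$ reversed (the inequality $n\le m$ is what keeps the upper bound at $m-2$ rather than $n-2$).

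The argument is essentially a clean case analysis, so I do not anticipate a serious obstacle; the only thing to be careful about is collecting the small-parameter boundary cases ($m=n=1$, $m=2$, and $S\subseteq B$ when $n=m$) so that none of them slip through, and confirming that in the $S\subseteq A$ subcase the propagation from every $b\in B$ really does add the one missing vertex of $A$ in a single step.
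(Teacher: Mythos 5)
Your proof is correct and takes essentially the same route as the paper's: the lower bound comes from an $(m-2)$-subset of the larger part, which stalls because the two leftover vertices are each a second unmonitored neighbor of every vertex of the smaller part, and the upper bound comes from the case split on how $S$ meets the two parts (meeting both dominates everything; lying in one part with at most one vertex missing forces that vertex in one propagation step). The only difference is cosmetic: the paper dispatches the boundary cases $n=1$ and $m=n=2$ by citing Theorem~\ref{thm:highvalues} and Theorem~\ref{thm:tostado}, whereas you absorb them into the uniform argument.
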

\begin{proof}
If $m = 1$,  $G=K_2$, clearly resulting in $\fgp(G)=0$.  If $m=n=2$, $K_{m,n}=C_4$, so $\fgp(K_{m,n} )=0$ by  Theorem \ref{thm:tostado}, item 2.  If $n=1$ but $m\geq 2$, then $\fgp(K_{m,n})=m-2$ by Theorem \ref{thm:highvalues}, item 3.

Assume $m \geq n \geq 2$ and let $S \subseteq V_1$ with $|S|=m-2$.  Let $u,v$ be the vertices in $V_1 \backslash S$.  Then $\mathcal{P}^0(S)=S \cup V_2$, and $V \backslash \mathcal{P}^0(S) = \{u, v\}$.  Since $N(u)=N(v)=V_2$, $\mathcal{P^1}(S)= \mathcal{P}^0(S)$, $S$ is stalled, and $\fgp(G) \geq m-2$.  Note that if $S$ consists of vertices in both $V_1$ and $V_2$,  then $\mathcal{P}^0(S)=V$, so if $S$ is an FPDS, $S \subseteq V_1$ or $S \subseteq V_2$.  Then $|S| \leq m-2$ because if $|S| > m-2$, then $\mathcal{P}^1(S)=V$.  Hence $\fgp(G) = m-2$.\end{proof}

\begin{figure}[h!]
\begin{center}
\begin{minipage}{3.2in}
\begin{center}
\begin{tikzpicture}[auto, scale=0.9]
\tikzstyle{vertex}=[draw, circle, inner sep=0.6mm]
\node (v1) at (0,0) [vertex, label=$u_0v_0$] {};
\node (v2) at (0,-1)  [vertex, label=below: $u_0v_1$] {};
\node (v7) at (1,0)  [vertex, label=$u_1v_0$] {};
\node (v8) at (1,-1)  [vertex, label=below: $u_1v_1$] {};
\node (v13) at (2,0)  [vertex, fill=blue, label=$u_2v_0$] {};
\node (v14) at (2,-1) [vertex, label=below: $u_2v_1$] {};
\node (v19) at (3,0) [vertex, label=$u_3v_0$] {};
\node (v20) at (3,-1) [vertex, label=below: $u_3v_1$] {};
\node (v25) at (4,0)  [vertex, label=$u_4v_0$] {};
\node (v26) at (4,-1)  [vertex, label=below: $u_4v_1$] {};
\node (v31) at (5,0)  [vertex, label=$u_5v_0$] {};
\node (v32) at (5,-1) [vertex, fill=blue, label=below: $u_5v_1$] {};
\node (v37) at (6,0) [vertex, label=$u_6v_0$] {};
\node (v38) at (6,-1)  [vertex, label=below: $u_6v_1$] {};
\node (v43) at (7,0)  [vertex, label=$u_7v_0$] {};
\node (v44) at (7,-1)  [vertex, label=below: $u_7v_1$] {};
\node (v49) at (8,0)  [vertex, label=$u_8v_0$] {};
\node (v50) at (8,-1) [vertex, label=below: $u_8v_1$] {};
\foreach[evaluate=\y using int(\x-1)] \x in {2, 8, 14, 20, 26, 32, 38, 44, 50}
\draw (v\y) to (v\x);
\foreach[evaluate=\z using int(\x-6)] \x in {8, 14, 20, 26, 32, 38, 44, 50}
\draw (v\z) to (v\x);
\foreach[evaluate=\w using int(\y-6)] \y in {7, 13, 19, 25, 31, 37, 43, 49}
\draw (v\w) to (v\y);
\end{tikzpicture}
\end{center}
\end{minipage}
\begin{minipage}{3.2in}
\begin{center}
\begin{tikzpicture}[auto, scale=0.9]
\tikzstyle{vertex}=[draw, circle, inner sep=0.6mm]
\node (v1) at (0,0) [vertex, label=$u_0v_0$] {};
\node (v2) at (0,-1)  [vertex, label=below: $u_0v_1$] {};
\node (v7) at (1,0)  [vertex, label=$u_1v_0$, fill=blue] {};
\node (v8) at (1,-1)  [vertex, label=below: $u_1v_1$] {};
\node (v13) at (2,0)  [vertex, fill=blue, label=$u_2v_0$] {};
\node (v14) at (2,-1) [vertex, label=below: $u_2v_1$, fill=blue] {};
\node (v19) at (3,0) [vertex, label=$u_3v_0$, fill=blue] {};
\node (v20) at (3,-1) [vertex, label=below: $u_3v_1$] {};
\node (v25) at (4,0)  [vertex, label=$u_4v_0$] {};
\node (v26) at (4,-1)  [vertex, label=below: $u_4v_1$, fill=blue] {};
\node (v31) at (5,0)  [vertex, label=$u_5v_0$, fill=blue] {};
\node (v32) at (5,-1) [vertex, fill=blue, label=below: $u_5v_1$] {};
\node (v37) at (6,0) [vertex, label=$u_6v_0$] {};
\node (v38) at (6,-1)  [vertex, label=below: $u_6v_1$, fill=blue] {};
\node (v43) at (7,0)  [vertex, label=$u_7v_0$] {};
\node (v44) at (7,-1)  [vertex, label=below: $u_7v_1$] {};
\node (v49) at (8,0)  [vertex, label=$u_8v_0$] {};
\node (v50) at (8,-1) [vertex, label=below: $u_8v_1$] {};
\foreach[evaluate=\y using int(\x-1)] \x in {2, 8, 14, 20, 26, 32, 38, 44, 50}
\draw (v\y) to (v\x);
\foreach[evaluate=\z using int(\x-6)] \x in {8, 14, 20, 26, 32, 38, 44, 50}
\draw (v\z) to (v\x);
\foreach[evaluate=\w using int(\y-6)] \y in {7, 13, 19, 25, 31, 37, 43, 49}
\draw (v\w) to (v\y);
\end{tikzpicture}
\end{center}
\end{minipage}
\caption{A ladder graph, $P_{9} \square P_2$ with FPDS $S$ in blue on the left and $\mathcal{P}^0(S)$ in blue on the right.}
\label{fig:ladder}
\end{center}
\end{figure}
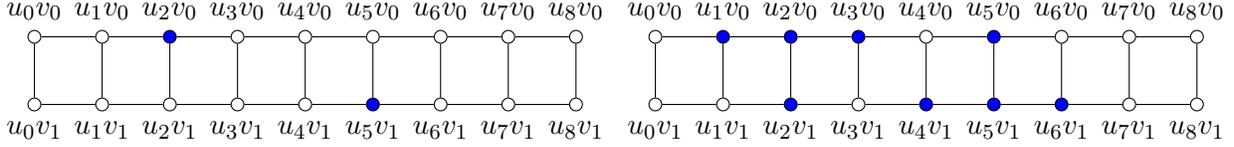

For graphs $G$ and $H$, we denote by $G \square H$ the \emph{Cartesian product} of $G$ and $H$, where $V(G \square H) = V(G) \times V(H)$.  If $u_1, u_2 \in V(G)$ and $v_1, v_2 \in V(H)$, then $(u_1, v_1)$ is adjacent to $(u_2, v_2)$ in $G\square H$ if and only if $u_1=u_2$  in $G$ and $\{v_1,v_2\} \in E(H)$, or $\{u_1, u_2\} \in E(G)$ and $v_1=v_2$ in $H$.  Here, we write $u_iv_j$ for vertex $(u_i, v_j)$ for brevity.  

A \emph{ladder graph} is the graph $P_n \square P_2$ for $n\geq 2$.  Each copy of $P_2$ is called a \emph{rung}.

\begin{thm}
For the ladder graph $G=P_k \square P_2$ with $k\geq 4$, $\fgp(G)= \left\lceil \frac{k-4}{3} \right\rceil$.
\end{thm}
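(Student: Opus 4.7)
The plan splits into the lower bound (a construction) and the upper bound (a structural argument).

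For the lower bound, I would set $m = \lceil (k-4)/3 \rceil$ and define
\[
S = \bigl\{u_{3j-1}\,v_{\epsilon_j} : 1 \le j \le m\bigr\}, \qquad \epsilon_j = \begin{cases} 0 & j \text{ odd}, \\ 1 & j \text{ even}. \end{cases}
\]
(The configuration of Figure~\ref{fig:ladder} is the case $k=9$, $m=2$.) The inequality $3m-1 \le k-3$ rearranges to $m \le \lceil(k-4)/3\rceil$, so the placement fits. Since $u_0v_0, u_0v_1 \notin \mathcal{P}^0(S)$, it suffices to check that $S$ is stalled: every $v \in \mathcal{P}^0(S) = \bigcup_j N[u_{3j-1}v_{\epsilon_j}]$ either has all three neighbors monitored (the centers $u_{3j-1}v_{\epsilon_j}$ themselves) or lies on the boundary of a ``T-shape'' and sees two unmonitored neighbors. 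The alternation of $\epsilon_j$ guarantees the latter, because between consecutive placements the two intermediate columns each contribute an unmonitored vertex on opposite sides of the ladder, and at the two ends columns $0$ and $k-1$ are almost entirely unmonitored.

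For the upper bound, the key reduction is that for any FPDS $S$, writing $M = \mathcal{P}^\infty(S) \subsetneq V$ and $T = V\setminus M$, one has $S \subseteq \mathrm{Int}(M) := \{v \in M : N(v) \subseteq M\}$, because every $v \in S$ satisfies $N[v] \subseteq \mathcal{P}^0(S) \subseteq M$. So it suffices to show $|\mathrm{Int}(M)| \le \lceil(k-4)/3\rceil$ for every proper $M$ for which $T$ is nonempty and no vertex of $M$ has exactly one neighbor in $T$. The central claim is that in any such $M$, distinct interior vertices lie in columns whose indices differ by at least $3$, and every column index appearing in $\mathrm{Int}(M)$ lies in $\{2, \ldots, k-3\}$. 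Granted this, the column indices form a subset of $\{2,\ldots,k-3\}$ with gaps of at least $3$, so
\[
|\mathrm{Int}(M)| \le \left\lfloor \frac{k-5}{3}\right\rfloor + 1 = \left\lceil \frac{k-4}{3}\right\rceil.
\]

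The structural claim is proved by a rippling argument. For $x = u_iv_j \in \mathrm{Int}(M)$ with $1 \le i \le k-2$, combining the stalling condition at the three neighbors of $x$ forces a dichotomy on the four ``second-neighbor'' vertices $u_{i\pm 2}v_j$ and $u_{i\pm 1}v_{1-j}$: either all four lie in $M$ (Case I) or all four lie in $T$ (Case II). In each offending configuration --- two interior vertices sharing a column, two at column distance $1$ or $2$ (same or opposite side), or two at distance $3$ on the same side --- the local structure pins down the key second-neighbor as lying in $M$ and so forces Case I between the offending pair; then iterated application of the stalling condition (using crucially that a degree-$2$ corner vertex in $M$ cannot see exactly one $T$-neighbor) propagates ``all in $M$'' outward column by column until the whole ladder is in $M$, contradicting $M \subsetneq V$. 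The case of a single interior vertex in a column of $\{0, 1, k-2, k-1\}$ is handled similarly by a direct ripple starting at $x$. The main obstacle is executing this ripple cleanly across the various subcases, since which second-neighbor is forced into $M$ first depends on whether the two interior vertices are on the same or opposite sides of the ladder, and the end-of-ladder behavior requires careful use of the degree-$2$ corner condition.
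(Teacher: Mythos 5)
Your proposal is correct, and both halves land on the same combinatorial facts as the paper: your lower-bound set is exactly the paper's construction (centers in columns $2,5,8,\dots$, alternating sides of the ladder), and your upper bound reduces to the same count, namely that the relevant column indices lie in $\{2,\dots,k-3\}$ with pairwise gaps at least $3$. Where you genuinely differ is in how the upper bound is run. The paper argues forward from $S$: each forbidden configuration (a vertex in one of the four end columns, two vertices at column distance at most $2$) is shown to force some $\mathcal{P}^i(S)$ to contain a zero forcing set, hence $S$ is a PDS. You instead pass to the stalled set $M=\mathcal{P}^{\infty}(S)$ and its complement $T$, note $S\subseteq \mathrm{Int}(M)$, and exploit the static condition that no vertex of $M$ has exactly one neighbor in $T$; your dichotomy on the four second-neighbors of an interior vertex, and the ripple once two consecutive full columns lie in $M$, is precisely the dual of the paper's "two consecutive full rungs form a zero forcing set." The dual formulation buys a slightly stronger conclusion ($|\mathrm{Int}(M)|\le\lceil(k-4)/3\rceil$ for every proper stalled $M$, not just for $S$ itself) and packages the cases more uniformly, at the cost of having to verify the dichotomy and the degree-$2$ corner cases explicitly; I checked the representative subcases and they all close as you describe. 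Two minor remarks: the "distance $3$ on the same side" configuration you exclude is genuinely forbidden but is not needed for the count, since the gap-$\ge 3$ condition already yields the bound; and your arithmetic $\lfloor(k-5)/3\rfloor+1=\lceil(k-4)/3\rceil=\lfloor(k-2)/3\rfloor$ is correct, including the degenerate case $k=4$ where $\{2,\dots,k-3\}$ is empty.
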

\begin{proof}
Let the vertices of $P_k$ be denoted by $u_i$, $0 \leq i \leq k-1$, and the vertices of $P_2$ by $v_j$, $j=0$ or $1$.  Define $S \subseteq V(P_k \square P_2)$ by $u_iv_j \in S$ if and only if $i \equiv 2 \mod 3$ with $2 \leq i \leq k-3$ and $j \equiv i \mod 2$, as in Figure \ref{fig:ladder}.  Then $|S| =\left\lceil \frac{k-4}{3} \right\rceil$.  We show that $S$ is an FPDS.  Suppose that $u_iv_j \in \mathcal{P}^0(S)$.  If $u_iv_j \in S$, then $N(u_iv_j ) \subseteq \mathcal{P}^0(S)$.  Otherwise, if $u_iv_j \in \mathcal{P}^0(S) \backslash S$, then $u_iv_j$ has exactly two neighbors in $V \backslash \mathcal{P}^0(S)$, namely $u_{i-1}v_j$ or $u_{i+1}v_j$, and $u_i v_{j'}$ where $j' \equiv (j+1) \mod 2$.  Thus $\mathcal{P}^{\infty}(S) = \mathcal{P}^0(S)$; that is, $S$ is an SPDS, giving us $\fgp(G) \geq \lceil \frac{k-4}{3} \rceil$.

To show that $\fgp(G) \leq\lceil \frac{k-4}{3} \rceil$, note that if $\{ u_0 v_0 , u_0 v_1 \} \subseteq Z$, then $Z$ is a zero forcing set, and similarly for $\{u_{k-1} v_0, u_{k-1} v_1\}$.    If $u_0 v_j \in S$, note that $\{ u_0 v_0 , u_0 v_1 \}  \subseteq \mathcal{P}^0(S)$, which implies that $\mathcal{P}^0(S)$ is a zero forcing set, and $S$ is a PDS, and similarly for the case that $u_{k-1} v_j \in S$.  Further, if $u_1 v_j \in S$, then $\{ u_0 v_j , u_1 v_j \}  \subseteq \mathcal{P}^0(S)$, and $\{ u_0 v_0, u_0 v_1, u_1 v_j \}  \subseteq \mathcal{P}^1(S)$, meaning that $\mathcal{P}^1(S)$ is a zero forcing set, and $S$ is a PDS, and similarly for $u_{k-2} v_j \in S$.  Thus, if $S$ is an FPDS, $u_i v_j \notin S$ for $i \in \{0, 1, k-2, k-1\}$ and $j  \in \{0, 1\}$.   That is, no vertices from the first two or last two rungs of the ladder are in any FPDS.  

Also, if $\{ u_i v_0 , u_i v_1,  u_{i+1} v_0 , u_{i+1} v_1  \} \subseteq Z$ for any $i \leq k-2$ (that is, if all vertices from two consecutive rungs are in $Z$), then $Z$ is a zero forcing set.   Thus, if $S$ is an FPDS with $u_i v_j \in S$, then $u_{i-1} v_j, u_{i-1} v_{j'}, u_{i+1} v_{j}, u_{i+1} v_{j'} \notin S$, and further, $u_{i-2} v_{j'}, u_{i+2} v_{j'} \notin S$ for $j' \equiv j \mod 2$.  Suppose that $u_i v_j, u_{i+2} v_j \in S$ for $2 \leq i \leq k-5$ and $j=1$ or $2$.  Then $\{ u_i v_j, u_i v_{j'},  u_{i+1} v_j,  u_{i+2} v_j, u_{i+2} v_{j'} \  \} \subseteq \mathcal{P}^0(S)$ where $j' = (j+1) \mod 2$.  Since $u_{i+1} v_{j'}$ is the only neighbor of $u_{i+1} v_j$ outside of $\mathcal{P}^0(S)$, $u_{i+1} v_j \in \mathcal{P}^1(S)$, giving us that $\{ u_i v_j, u_i v_{j'},  u_{i+1} v_j,  u_{i+1} v_{j'}, u_{i+2} v_j, u_{i+2} v_{j'} \  \} \subseteq \mathcal{P}^1(S)$.  This forms a zero forcing set of $G$; hence, $S$ is a PDS.  Thus, if $S$ is an FPDS with $u_i v_j, u_{i'} v_{j'} \in S$, then $| i - i' | \geq 3$.  That is, $\fgp(G) \leq \lceil \frac{k-4}{3} \rceil$.  
\end{proof}

\begin{thm}
For the graph $G=K_k \square P_\ell$ with $k, \ell \geq 3$, $\fgp(G)=  (k-2) \left\lfloor \frac{\ell-1}{2} \right\rfloor$.
\end{thm}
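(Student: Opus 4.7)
The plan is to match lower and upper bounds of $(k-2) \lfloor(\ell-1)/2\rfloor$. For the lower bound, I will exhibit an explicit FPDS: set $m = \lfloor(\ell-1)/2\rfloor$ and
$$S = \{u_i v_j : 0 \leq i \leq k-3,\ j \in \{1, 3, \ldots, 2m-1\}\},$$
so $|S| = (k-2) m$. After the domination step every chosen (odd) layer $v_j$ is fully dominated via its clique $K_k$, while in each adjacent even layer exactly the vertices $u_0 v_j, \ldots, u_{k-3} v_j$ are dominated by column propagation, leaving $u_{k-2} v_j$ and $u_{k-1} v_j$ undominated; if $\ell$ is even the final layer $v_{\ell-1}$ is entirely undominated. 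A direct case check verifies every dominated vertex has $0$ or at least $2$ undominated neighbors, so $S$ is stalled and hence an FPDS.

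For the upper bound, let $S$ be any FPDS, $T = \mathcal{P}^{\infty}(S)$, and $H = V(G) \setminus T$; then $H$ is non-empty. Since propagation has terminated, every $v \in T$ satisfies $|N(v) \cap H| \neq 1$, and since $\mathcal{P}^0(S) \subseteq T$ we have $S \cap N[H] = \emptyset$, so $|S| \leq |V(G) \setminus N[H]|$. It therefore suffices to show that $|V(G) \setminus N[H]| \leq (k-2) m$ for every non-empty $H \subseteq V(G)$ with $|N(v) \cap H| \neq 1$ for all $v \notin H$.

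Writing $H_j = \{i : u_i v_j \in H\}$ and partitioning the $\ell$ layers into $L = \{j : H_j \neq \emptyset\}$ and $E = \{j : H_j = \emptyset\}$, the condition on $H$ at an $E$-layer forces $H_{j-1} = H_{j+1}$, which together with the non-emptiness of $H$ yields $0, \ell - 1 \in L$ and forbids two consecutive $E$-layers. A short calculation then gives $|V(G) \setminus N[H]| = \sum_{j \in E}(k - |H_{j-1}|)$. The $L$-layers naturally group into \emph{classes}, namely maximal arithmetic progressions of common difference $2$ joined through $E$-layers; within a class $C_i$ of size $q_i$ the value $f_i := |H_j|$ is constant. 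If there are $p$ classes, then $\ell = 2 \sum q_i - p$, $|E| = \sum (q_i - 1)$, and the quantity of interest equals $T_H := \sum_{i=1}^p (q_i - 1)(k - f_i)$.

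To conclude $T_H \leq (k-2) m$, set $D = m - |E| \geq 0$ and rewrite $T_H - (k-2) m = -(k-2) D + \sum (q_i - 1)(2 - f_i)$. The condition on $H$ applied at an interior $L$-layer of a class forces $f_i \geq 2$ whenever $q_i \geq 3$, and applied at the boundary layers $0$ or $\ell-1$ forces $f_i \geq 2$ for the first and last classes whenever $q_i = 2$. Calling a class \emph{bad} if $q_i = 2$ and $f_i = 1$, the condition on a bad class further forces both its neighboring classes to have $f \geq k - 1$, so no two consecutive classes are bad and neither the first nor last class is bad. Hence the number $B$ of bad classes satisfies $B \leq \lceil(p - 2)/2\rceil$, which equals $D$ via $|E| = (\ell - p)/2$ and $p \equiv \ell \pmod 2$. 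Since each bad class contributes $+1$ to $\sum(q_i - 1)(2 - f_i)$ and every other class contributes at most $0$, we get $\sum(q_i - 1)(2 - f_i) \leq B \leq D \leq (k-2) D$, giving $T_H \leq (k-2) m$. I anticipate the hardest part to be this combinatorial accounting, particularly verifying that the $+1$ gains from bad classes are exactly absorbed by the $(k-2) D$ slack coming from the deficit in $|E|$.
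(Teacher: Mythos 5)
Your lower bound is the same construction as the paper's (the $(k-2)$ ``short'' columns placed on alternate interior layers), but your upper bound takes a genuinely different route. The paper argues directly about the failed set $S$: it shows that a vertex in an end layer, or two $S$-vertices in consecutive layers of the same $P_\ell$-column, or a nearly full layer each force $S$ to be a PDS, and then pushes through a pigeonhole count. You instead pass to the complement $H=V\setminus\mathcal{P}^{\infty}(S)$ of the monitored region, observe that $H$ is a nonempty ``fort'' (no outside vertex sees exactly one vertex of $H$) with $S\subseteq V\setminus N[H]$, and then bound $\max_H \lvert V\setminus N[H]\rvert$ by a layer-by-layer analysis: empty layers force $H_{j-1}=H_{j+1}$, which yields your class decomposition, the $f_i\ge 2$ constraints at interior and boundary layers, and the accounting of ``bad'' classes against the slack $D=m-\lvert E\rvert$. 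I checked the key steps ($0,\ell-1\in L$, no two consecutive empty layers, $\lvert V\setminus N[H]\rvert=\sum_{j\in E}(k-\lvert H_{j-1}\rvert)$, $f_i\ge 2$ when $q_i\ge 3$ or at the extreme classes, bad classes forcing $f\ge k-1$ on both neighbors, and $B\le\lceil (p-2)/2\rceil=D$) and they all hold, including the edge cases $p=1,2$. What your approach buys is rigor and generality: it bounds \emph{every} failed set via the structure of the unobserved region rather than via a case analysis of which placements of $S$ happen to succeed, and the fort technique transfers to other products; the cost is that it is longer and the combinatorial bookkeeping (classes, bad classes, the $(k-2)D$ slack) is delicate, whereas the paper's argument is shorter but leaves several of its ``then $S'$ is a PDS'' claims unproved. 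Both are valid; yours is arguably the more complete proof of the upper bound.
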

\begin{proof}
Let the vertices of $K_k$ be denoted $w_i$, $0 \leq i \leq k-1$, and the vertices of $P_\ell$ denoted $x_i$, $0 \leq i \leq \ell-1$.  Define $S \subseteq V(K_k \square P_\ell)$ by $w_i x_j \in S$ if and only if $i \leq k-3$ and $j$ is odd with $j < \ell -1$.  Then $\mathcal{P}^0(S)= V(G \square H) \backslash \left( \{ w_i x_j | i \geq k-2 \mbox{ and } j \mbox{ is even}\} \cup  \{ w_i x_{l-1} |  \mbox{ for any  } i \mbox{ if } l  \mbox{ is even}\}  \right)$.  If $w_i x_j \in S$ with $j$ even, then $w_i x_j$ is adjacent to $w_{k-1} x_j$ and $w_k x_j$; if $w_i x_j \in S$ with $j$ odd and $i < k-2$, then $N(w_i x_j) \subseteq S$; if $w_i x_j \in S$ with $j$ odd and $i \geq k-2$, then $w_i, x_j$ is adjacent to vertices $w_i x_{j-1}$ and $w_i x_{j+1}$, both of which are not in $\mathcal{P}^0(S)$.  Hence $\mathcal{P}^1(S)=\mathcal{P}^0(S)$, and $S$ is an FPDS with $|S|= (k-2) \left\lfloor \frac{\ell-1}{2} \right\rfloor $, giving us $\fgp(G)\geq (k-2) \left\lfloor \frac{\ell-1}{2} \right\rfloor$.  

Now, any set $S'$ with a vertex $w_i x_0$ or $w_i x_{\ell} \in S'$ for any $i$ is a PDS.  Further, if $w_i x_j \in S'$ and $w_i x_{j+1}  \in S'$ for any $j$, then $S'$ is a PDS.  Hence if $S'$ is an FPDS with $|S'| >(k-2) \lfloor \frac{\ell-1}{2} \rfloor $, then there exists some $t$ with $1 \leq t \leq \ell - 2$ and $w_i x_{t} \in S'$ for all $i$ except at most one.  If  $w_i x_{t} \in S'$ for all $i$, then $S'$ is a PDS, thus we must have $w_i x_{t} \notin S$ for some $i$, say $i=k-1$.  Then $\mathcal{P}^0(S')$ includes $w_i  x_{t}$ for all $i$ as well as $w_i x_{t+1}$ and $w_i x_{t-1}$  for all $i<k-1$.  If $w_{k-1} x_{t-1}$ or $w_{k-1} x_{t+1} \in \mathcal{P}^m(S')$ for any $m$, then $S'$ is a PDS.  Hence $2 \leq t \leq \ell-3$, and for every $i$, none of the following vertices are in $S'$: $w_i x_{t-2}$,   $w_i x_{t-1}$,  $w_i x_{t+1}$,  or $w_i x_{t+2}$.  This gives us $|S'| \leq  (k-1) \lceil \frac{\ell-4}{3} \rceil$.  Note that $l \geq 5$ since $l < 5$ implies $S'$ is a PDS.  Since $k \geq 3$, $|S'| <  (k-2) \left\lfloor \frac{\ell-1}{2} \right\rfloor$.  Hence $\fgp(G)=  (k-2) \left\lfloor \frac{\ell-1}{2} \right\rfloor$.   \end{proof}

\section{Future work}

While we were able to produce a list of graphs that have $\fgp(G)=0$ (where every single vertex is itself a PDS), a complete description of all such graphs is still open.  The zero forcing number of trees has been related to other parameters such as the path cover number \cite{aim2008zero}, and 
a technique for determining the zero forcing number of a graph with a cut-vertex was also described \cite{row2012technique}.  Achieving similar results for the failed power domination number of a graph is a feasible problem.  Many parameters in zero forcing, especially related to minimum rank, are investigated for their adherence to a property known as the \emph{Graph Complement Conjecture} which states that the sum of the parameter on $G$ and on the complement graph $\overline{G}$ is bounded by $|V(G)|$ plus a small constant.  For  minimum rank, $\mr(G)$, the conjecture is: $\mr(G)+\mr(\overline{G}) = |V(G)|+2$.  Originally mentioned at an  American Institute for Mathematics workshop \cite{aimworkshop}, it formally appeared in  \cite{barioli2012graph}.   It is natural, and likely challenging, to investigate whether there is any such relationship among power dominating numbers or failed power dominating numbers of graphs and their complements.  

\bibliography{spdibib}
\bibliographystyle{plain}

\end{document}